\newif\ifdraft
\definecolor{labelkey}{gray}{0.5}
\newlength{\myarrowsize} 
\newenvironment{diagram*}[2]{%
\[%
\begin{tikzpicture}[>=cmto,baseline=(current bounding box.center),%
	to/.style={->,font=\scriptsize,cap=round},%
	into/.style={cmhook->,font=\scriptsize,cap=round},%
	onto/.style={-cmonto,font=\scriptsize,cap=round},%
	math/.style={matrix of math nodes, row sep=#2, column sep=#1,%
		text height=1.5ex, text depth=0.25ex}]%
}{%
\end{tikzpicture}%
\]% 
\ignorespacesafterend%
}
\newcommand{\Dmod}{\mathscr{D}}
\newcommand{\Mmod}{\mathcal{M}}
\newcommand{\NN}{\mathbb{N}}
\newcommand{\ZZ}{\mathbb{Z}}
\newcommand{\RR}{\mathbb{R}}
\newcommand{\CC}{\mathbb{C}}
\newcommand{\HH}{\mathbb{H}}
\DeclareMathOperator{\im}{im}
\DeclareMathOperator{\coker}{coker}
\DeclareMathOperator{\codim}{codim}
\DeclareMathOperator{\Pic}{Pic}
\newcommand{\shf}[1]{\mathscr{#1}}
\def\overbar#1#2#3{{%
	\setbox0=\hbox{\displaystyle{#1}}%
	\dimen0=\wd0
	\advance\dimen0 by -#2 
	\vbox {\nointerlineskip \moveright #3 \vbox{\hrule height 0.3pt width \dimen0}%
		\nointerlineskip \vskip 1.5pt \box0}%
}}
\newcommand{\shO}{\shf{O}}
\let\@@seccntformat\@seccntformat
\renewcommand*{\@seccntformat}[1]{%
  \expandafter\ifx\csname @seccntformat@#1\endcsname\relax
    \expandafter\@@seccntformat
  \else
    \expandafter
      \csname @seccntformat@#1\expandafter\endcsname
  \fi
    {#1}%
}
\newcommand*{\@seccntformat@subsection}[1]{%
  \textbf{\csname the#1\endcsname.}
}
\let\@paragraph\paragraph
\renewcommand*{\paragraph}[1]{%
	\vspace{0.3\baselineskip}%
	\@paragraph{\textit{#1}}%
}
\newtheorem{theorem}[equation]{Theorem}
\newtheorem*{theorem*}{Theorem}
\newtheorem{lemma}[equation]{Lemma}
\newtheorem*{lemma*}{Lemma}
\newtheorem{corollary}[equation]{Corollary}
\newtheorem{proposition}[equation]{Proposition}
\newtheorem*{proposition*}{Proposition}
\newtheorem{conjecture}[equation]{Conjecture}
\theoremstyle{definition}
\newtheorem*{definition*}{Definition}
\newtheorem{remark}[equation]{Remark}
\newtheorem{example}[equation]{Example}
\newtheorem*{example*}{Example}
\newtheorem*{problem*}{Problem}
\theoremstyle{plain}
\newcommand{\theoremref}[1]{\hyperref[#1]{Theorem~\ref*{#1}}}
\newcommand{\lemmaref}[1]{\hyperref[#1]{Lemma~\ref*{#1}}}
\newcommand{\definitionref}[1]{\hyperref[#1]{Definition~\ref*{#1}}}
\newcommand{\propositionref}[1]{\hyperref[#1]{Proposition~\ref*{#1}}}
\newcommand{\conjectureref}[1]{\hyperref[#1]{Conjecture~\ref*{#1}}}
\newcommand{\corollaryref}[1]{\hyperref[#1]{Corollary~\ref*{#1}}}
\newcommand{\exampleref}[1]{\hyperref[#1]{Example~\ref*{#1}}}
\let\old@caption\caption
\renewcommand*{\caption}[1]{%
	\setcounter{figure}{\value{equation}}%
	\stepcounter{equation}%
	\old@caption{#1}\relax%
}
\newcounter{intro}
\newtheorem{intro-conjecture}[intro]{Conjecture}
\newtheorem{intro-corollary}[intro]{Corollary}
\newtheorem{intro-theorem}[intro]{Theorem}
\newcommand{\parref}[1]{\hyperref[#1]{\S\ref*{#1}}}
\newcommand*\if@single[3]{%
  \setbox0\hbox{${\mathaccent"0362{#1}}^H$}%
  \setbox2\hbox{${\mathaccent"0362{\kern0pt#1}}^H$}%
  \ifdim\ht0=\ht2 #3\else #2\fi
  }
\newcommand*\rel@kern[1]{\kern#1\dimexpr\macc@kerna}
\newcommand*\widebar[1]{\@ifnextchar^{{\wide@bar{#1}{0}}}{\wide@bar{#1}{1}}}
\newcommand*\wide@bar[2]{\if@single{#1}{\wide@bar@{#1}{#2}{1}}{\wide@bar@{#1}{#2}{2}}}
\newcommand*\wide@bar@[3]{%
  \begingroup
  \def\mathaccent##1##2{%
%If there's more than a single symbol, use the first character instead (see below):
    \if#32 \let\macc@nucleus\first@char \fi
%Determine the italic correction:
    \setbox\z@\hbox{$\macc@style{\macc@nucleus}_{}$}%
    \setbox\tw@\hbox{$\macc@style{\macc@nucleus}{}_{}$}%
    \dimen@\wd\tw@
    \advance\dimen@-\wd\z@
%Now \dimen@ is the italic correction of the symbol.
    \divide\dimen@ 3
    \@tempdima\wd\tw@
    \advance\@tempdima-\scriptspace
%Now \@tempdima is the width of the symbol.
    \divide\@tempdima 10
    \advance\dimen@-\@tempdima
%Now \dimen@ = (italic correction / 3) - (Breite / 10)
    \ifdim\dimen@>\z@ \dimen@0pt\fi
%The bar will be shortened in the case \dimen@<0 !
    \rel@kern{0.6}\kern-\dimen@
    \if#31
      \overline{\rel@kern{-0.6}\kern\dimen@\macc@nucleus\rel@kern{0.4}\kern\dimen@}%
      \advance\dimen@0.4\dimexpr\macc@kerna
%Place the combined final kern (-\dimen@) if it is >0 or if a superscript follows:
      \let\final@kern#2%
      \ifdim\dimen@<\z@ \let\final@kern1\fi
      \if\final@kern1 \kern-\dimen@\fi
    \else
      \overline{\rel@kern{-0.6}\kern\dimen@#1}%
    \fi
  }%
  \macc@depth\@ne
  \let\math@bgroup\@empty \let\math@egroup\macc@set@skewchar
  \mathsurround\z@ \frozen@everymath{\mathgroup\macc@group\relax}%
  \macc@set@skewchar\relax
  \let\mathaccentV\macc@nested@a
%The following initialises \macc@kerna and calls \mathaccent:
  \if#31
    \macc@nested@a\relax111{#1}%
  \else
%If the argument consists of more than one symbol, and if the first token is
%a letter, use that letter for the computations:
    \def\gobble@till@marker##1\endmarker{}%
    \futurelet\first@char\gobble@till@marker#1\endmarker
    \ifcat\noexpand\first@char A\else
      \def\first@char{}%
    \fi
    \macc@nested@a\relax111{\first@char}%
  \fi
  \endgroup
}
\newcommand{\I}{\mathcal{I}}
\DeclareMathOperator{\tot}{Tot}
\def\C{{\mathcal C}}
\def\ZZ{{\mathbf Z}}
\def\NN{{\mathbf N}}
\def\CC{{\mathbf C}}
\def\AAA{{\mathbf A}}
\def\RR{{\mathbf R}}
\def\Pic{{\rm Pic}}
\begin{document}
%\hfill \today

\vspace{\baselineskip}

\title{Local vanishing and Hodge filtration for rational singularities}

\author[M. Musta\c{t}\v{a}]{Mircea~Musta\c{t}\u{a}}
\address{Department of Mathematics, University of Michigan,
Ann Arbor, MI 48109, USA}
\email{{\tt mmustata@umich.edu}}

\author[S.~Olano]{Sebasti\'an~Olano}
\address{Department of Mathematics, Northwestern University, 
2033 Sheridan Road, Evanston, IL
60208, USA} \email{{\tt seolano@math.northwestern.edu}}

\author[M.~Popa]{Mihnea~Popa}
\address{Department of Mathematics, Northwestern University, 
2033 Sheridan Road, Evanston, IL
60208, USA} \email{{\tt mpopa@math.northwestern.edu}}

\thanks{MM was partially supported by NSF grant DMS-1401227; MP was partially supported by NSF grant
DMS-1405516.}

\subjclass[2010]{14J17, 14F17, 32S25, 32S35}

\begin{abstract}
Given an $n$-dimensional variety $Z$ with rational singularities, we conjecture that if
$f\colon Y\to Z$ is a resolution of singularities whose reduced exceptional divisor $E$ has
simple normal crossings, then 
$$R^{n-1}f_*\Omega_Y(\log E)=0.$$
We prove this when $Z$ has isolated singularities and when it is a toric variety.
We deduce that for a divisor $D$
with isolated rational singularities on a smooth complex $n$-dimensional variety $X$, 
the generation level of Saito's Hodge filtration on the localization $\shO_X(*D)$ is at most $n-3$.
\end{abstract}

\maketitle

\section{Introduction}

We propose the following local vanishing conjecture for  log resolutions of varieties with rational singularities:

\begin{intro-conjecture}\label{conj_vanishing}
If $Z$ is a complex variety of dimension $n\geq 2$, with rational singularities, and $f\colon Y\to Z$ is a resolution of singularities whose reduced exceptional divisor $E$ has simple normal crossings, then
$$
%\begin{equation}\label{eq1_conj_vanishing}
R^{n-1}f_*\Omega_Y(\log E)=0.
%\end{equation}
$$
\end{intro-conjecture}

The related local vanishing 
$$
R^{n-1}f_*\big( \Omega_Y(\log E) \otimes \shO_Y (- E)\big)=0
$$
is already known; it is a variant of the Steenbrink-type vanishing theorem \cite[Theorem~14.1]{GKKP}, as explained in 
\S\ref{GKKP}.

The main purpose of this paper is to answer in the affirmative the case of isolated singularities.

\begin{intro-theorem}\label{main}
Conjecture~\ref{conj_vanishing} holds when $Z$ has isolated singularities.
\end{intro-theorem}

The proof relies on results from both birational geometry and Hodge theory. One ingredient is the 
Steenbrink-type vanishing from \cite{GKKP} mentioned above. With the help of this theorem, we reduce our statement to a problem in Hodge theory. In the case of surfaces, it can be solved using the Hodge Index theorem. In higher dimension however, the solution relies on more subtle results of de Cataldo-Migliorini \cite{cm05}, \cite{cm07} on the Hodge theory of algebraic maps, combined with rudiments of mixed Hodge theory. 

We also show the following statement, relying on standard facts from the theory of toric varieties.

\begin{intro-theorem}\label{thm_toric}
Conjecture~\ref{conj_vanishing} holds when $Z$ is a toric variety.
\end{intro-theorem}

One source of interest in Conjecture \ref{conj_vanishing} is the fact that, according to a criterion in \cite{MP}, it leads to a bound on the generation level of Saito's Hodge filtration for hypersurfaces with rational singularities. Given a smooth complex variety 
$X$, and a reduced divisor $D$ on $X$, let $\shO_X (*D)$ be the $\Dmod_X$-module of rational functions with poles along $D$, i.e. the localization of $\shO_X$ along $D$.
Saito's theory of mixed Hodge modules \cite{Saito-MHM} endows it with a Hodge filtration $F_k \shO_X (*D)$, $k\ge 0$, compatible with the standard filtration on $\Dmod_X$,
where $F_{\ell} \Dmod_X$ consists of differential operators of order at most $\ell$.

Saito introduced in \cite{Saito-HF} a measure of the complexity of this filtration; one says that it is \emph{generated at level $k$} if 
$$F_{\ell} \Dmod_X \cdot F_k\shO_X(*D)=F_{k+\ell}\shO_X(*D)\quad\text{for all}\quad \ell\geq 0.$$
The smallest integer $k$ with this property is called the \emph{generating level}.
It was shown in \cite[Theorem~B]{MP} that if $X$ has dimension $n \ge 2$, then $F_\bullet \shO_X(*D)$ is always generated at level $n-2$. This bound is sharp even when 
$n \ge 3$; see e.g. \cite[Example~17.9]{MP}. We propose an improvement in the case of rational singularities:

\begin{intro-conjecture}\label{conj_filtration}
If $D$ has only rational singularities and $n \ge 3$, 
then the Hodge filtration $F_\bullet \shO_X (*D)$ is generated at level $n -3$.
\end{intro-conjecture}

When $D$ has an isolated quasihomogeneous singularity, a stronger bound was given by Saito in \cite[Theorem~0.7]{Saito-HF}: the generating level of $F_\bullet \shO_X(*D)$ is $[n - \alpha_f] - 1$, 
where $\alpha_f$ is the microlocal log
canonical threshold of $D$, i.e. the negative of the largest root of its reduced Bernstein-Sato polynomial. It is known that the singularity being rational is equivalent to $\alpha_f > 1$; see \cite[Theorem~0.4]{Saito-B}.  We note that for isolated semiquasihomogeneous singularities the generating level can be even lower. In particular, the example in Remark (i) after 
5.4 in \cite{Saito-HF} provides a singularity which is not rational (as $\alpha_f < 1$), but with generating level at most $n-3$. This shows that the converse of the statement of Conjecture \ref{conj_filtration} 
is not true in general.

A consequence of Theorem \ref{main} is the fact that Conjecture \ref{conj_filtration} holds whenever the divisor $D$ has isolated singularities. More precisely, we show the following:

\begin{intro-theorem}\label{thm_filtration}
Conjecture~\ref{conj_filtration} is equivalent to Conjecture~\ref{conj_vanishing} when $Z$ is a hypersurface.
In particular, Conjecture~\ref{conj_filtration} holds when the divisor $D$ has isolated singularities.
\end{intro-theorem}

It is natural to ask more boldly whether Saito's formula $[n - \alpha_f] - 1$ for the generating level holds for all rational singularities. 

We also propose in Theorem \ref{thm_m_adic} a reduction of the full statement of Conjecture \ref{conj_filtration} to the case of isolated singularities treated here. It is based on a conjectural statement of independent interest regarding Hodge ideals \cite{MP}, an alternative way of approaching the study of $F_\bullet \shO_X(*D)$.
More precisely, the statement is about their $\frak{m}$-adic approximation, and is known to hold for multiplier ideals; 
see Conjecture \ref{conj_m_adic} and Example \ref{multiplier}. 

\medskip

\noindent
{\bf Acknowledgements.}
We thank Mark Andrea de Cataldo for useful conversations and S\'{a}ndor Kov\'acs for his comments on a preliminary version of this paper. We are also grateful to the referee for suggesting a quicker and more conceptual argument for the main result in 
\S\ref{scn:beta}.

\section{The proof for isolated singularities}

Our goal in this section is to prove Conjecture~\ref{conj_vanishing} in the case of varieties with isolated singularities. 

\subsection{Preliminaries}
We fix a variety $Z$ with rational singularities and a resolution $f\colon Y\to Z$
as in Conjecture~\ref{conj_vanishing}, with reduced exceptional divisor $E=\sum_{i=1}^d E_i$, where the $E_i$ are mutually distinct prime divisors.

\begin{lemma}\label{lem1_conj_vanishing}
The assertion in Conjecture~\ref{conj_vanishing} is independent of the chosen resolution.  
\end{lemma}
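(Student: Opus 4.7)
The plan is to compare any two log resolutions via a common refinement, and then invoke the classical birational invariance of sheaves of logarithmic differentials on SNC pairs.

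Given two log resolutions $f\colon Y\to Z$ and $f'\colon Y'\to Z$ as in Conjecture~\ref{conj_vanishing}, I would first note that the birational map $Y\dashrightarrow Y'$ they determine is an isomorphism over $Z\setminus Z_{\mathrm{sing}}$. Resolving the closure of its graph in $Y\times_Z Y'$ by successive blow-ups with centers above $Z_{\mathrm{sing}}$, and further log-resolving inside the preimages of $E$ and $E'$, yields a smooth variety $\tilde Y$ equipped with proper birational morphisms $g\colon \tilde Y\to Y$ and $g'\colon \tilde Y\to Y'$ such that $\tilde f:=f\circ g = f'\circ g'\colon \tilde Y\to Z$ is itself a log resolution as in the conjecture, and such that $g$ (respectively $g'$) is an isomorphism away from $E$ (respectively $E'$). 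Consequently, the reduced exceptional divisor $\tilde E$ of $\tilde f$ is SNC and coincides with both $g^{-1}(E)_{\mathrm{red}}$ and $(g')^{-1}(E')_{\mathrm{red}}$.

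By symmetry it will be enough to prove $R^{n-1}f_*\Omega_Y(\log E)\cong R^{n-1}\tilde f_*\Omega_{\tilde Y}(\log \tilde E)$. Writing $\tilde f=f\circ g$ and applying the Leray spectral sequence, this reduces to the sheaf-level identity
$$Rg_*\Omega_{\tilde Y}(\log \tilde E)\cong \Omega_Y(\log E).$$
The latter is a standard instance of the birational invariance of logarithmic differentials on SNC pairs: for a proper birational morphism $g$ between smooth varieties with $E$ SNC on the target and $\tilde E=g^{-1}(E)_{\mathrm{red}}$ SNC on the source, one has $g_*\Omega_{\tilde Y}(\log\tilde E)=\Omega_Y(\log E)$ together with $R^i g_*\Omega_{\tilde Y}(\log\tilde E)=0$ for $i\geq 1$. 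This is a classical fact going back to Deligne, and is used for example in \cite{GKKP}.

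The only step requiring any care will be the construction of the common refinement, where one must ensure that $g$ and $g'$ are genuine isomorphisms outside $E$ and $E'$; this is arranged by taking all blow-up centers above the singular locus of $Z$, and is routine in the theory of resolutions. After that, the Leray argument combined with the birational invariance of $\Omega(\log)$ is formal, so I do not expect a serious obstacle.
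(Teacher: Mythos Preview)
Your argument is correct and essentially identical to the paper's proof: reduce to a resolution dominating both, then apply the Leray spectral sequence together with the birational invariance $Rg_*\Omega_{\tilde Y}(\log\tilde E)\simeq\Omega_Y(\log E)$ (the paper cites \cite[Theorem~31.1(ii)]{MP} for this). One small caveat: the birational invariance statement as you phrase it requires that $\tilde E$ contain the full $g$-exceptional divisor (equivalently, that $g$ be an isomorphism away from $E$), a hypothesis you have indeed arranged in your construction but which is not implied merely by writing $\tilde E=g^{-1}(E)_{\mathrm{red}}$.
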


\begin{proof}
Given any two resolutions as in the statement, we can find one that dominates both. 
Therefore it is enough to consider the case when $g\colon W\to Y$ is such that $h=f\circ g$ is another 
resolution of $Z$ whose reduced exceptional divisor $F$ has simple normal crossings. Note that in this case
$F$ is the sum of the strict transform of $E$ and the $g$-exceptional divisor. Therefore, since
$Y$ is smooth and $E$ has simple normal crossings, we deduce from 
\cite[Theorem~31.1(ii)]{MP} that
$$f_*\Omega_W(\log F)\simeq \Omega_Y(\log E)$$
and
$$R^if_*\Omega_W(\log F)=0\quad\text{for all}\quad i>0.$$
 The Leray spectral sequence then gives
$$R^qh_*\Omega_W(\log F)\simeq R^qf_*\Omega_Y(\log E)\quad\text{for all}\quad q\geq 0,$$
which implies the assertion.
\end{proof}

\begin{remark}
Note that Lemma~\ref{lem1_conj_vanishing} implies in particular that Conjecture~\ref{conj_vanishing} holds when $Z$ is smooth. Indeed, it allows us to take $f$ to be the identity, in which case the assertion is clear.
\end{remark}

We now begin the preparations for the proof of Theorem~\ref{main}. 
By Lemma~\ref{lem1_conj_vanishing}, the vanishing in Conjecture~\ref{conj_vanishing} does not depend on $f$.
Hence we may and will assume that $f$ is a composition of blow-ups with centers lying over the singular locus $Z_{\rm sing}$ of $Z$.
We also assume that
the exceptional locus of $f$ has pure codimension $1$ (and it is thus equal to the support of $E$).
In particular, $f$ is an isomorphism over $Z\smallsetminus Z_{\rm sing}$, and thus $E$ lies over $Z_{\rm sing}$.
The assertion is also local on $Z$, hence without loss of generality we may and will assume that $Z$ is affine.
We will identify coherent sheaves on $Z$ with their spaces of global sections.

\subsection{A reformulation of the problem}
We have the following:

\begin{lemma}\label{vanishing_O}
With the above notation, we have
$$H^{n-1} (E_i,\shO_{E_i})=0 \quad\text{for all}\quad 1 \le i \le d.$$
\end{lemma}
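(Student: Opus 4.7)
The plan is to deduce the lemma from the stronger statement $H^{n-1}(E,\mathcal{O}_E)=0$ for the entire reduced exceptional divisor. Once this is established, the restriction $\mathcal{O}_E \twoheadrightarrow \mathcal{O}_{E_i}$ fits into a short exact sequence $0 \to \mathcal{J} \to \mathcal{O}_E \to \mathcal{O}_{E_i} \to 0$ in which $\mathcal{J}$ is a coherent sheaf supported on $E$, hence of dimension at most $n-1$. Grothendieck vanishing gives $H^n(E,\mathcal{J})=0$, so the long exact sequence produces a surjection $H^{n-1}(E,\mathcal{O}_E) \twoheadrightarrow H^{n-1}(E_i,\mathcal{O}_{E_i})$, and the lemma follows.

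To prove the stronger vanishing, I will study the infinitesimal thickenings of $E$ in $Y$. Let $E^{(m)} \subset Y$ denote the closed subscheme cut out by $\mathcal{I}_E^m$. Since $E$ is a Cartier divisor, $\mathcal{I}_E = \mathcal{O}_Y(-E)$ is invertible, and consequently
\[
\mathcal{I}_E^m / \mathcal{I}_E^{m+1} \cong \mathcal{O}_E(-mE),
\]
yielding short exact sequences
\[
0 \to \mathcal{O}_E(-mE) \to \mathcal{O}_{E^{(m+1)}} \to \mathcal{O}_{E^{(m)}} \to 0
\]
for every $m \geq 1$. Because $\dim E = n-1$, one has $H^n(E, \mathcal{O}_E(-mE)) = 0$, and so the associated transition maps $H^{n-1}(E^{(m+1)}, \mathcal{O}_{E^{(m+1)}}) \to H^{n-1}(E^{(m)}, \mathcal{O}_{E^{(m)}})$ are surjective.

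On the other hand, rationality of the singularities of $Z$ implies $R^{n-1}f_*\mathcal{O}_Y = 0$. Working locally around an isolated singular point $p$ and invoking Grothendieck's formal functions theorem,
\[
\lim_m H^{n-1}\bigl(E^{(m)},\mathcal{O}_{E^{(m)}}\bigr) = 0,
\]
where one uses that the ideals $\mathcal{I}_E^m$ are cofinal with the powers of $f^{-1}(\mathfrak{m}_p)\mathcal{O}_Y$ (since both define the same reduced subscheme). Combined with the surjectivity of the transition maps, a Mittag-Leffler argument forces every $H^{n-1}(E^{(m)},\mathcal{O}_{E^{(m)}})$ to vanish; in particular, $H^{n-1}(E,\mathcal{O}_E) = H^{n-1}(E^{(1)},\mathcal{O}_{E^{(1)}}) = 0$, completing the proof.

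The argument is essentially elementary; the only point that needs some care is the cofinality comparison in the application of formal functions, but this is routine. No genuine obstacle arises, and this lemma does not yet require the heavier Hodge-theoretic input on which the main theorem of the paper will eventually rely.
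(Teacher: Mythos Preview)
Your argument is correct, but it takes a considerably longer detour than the paper's proof. The paper simply uses the short exact sequence
\[
0 \longrightarrow \shO_Y(-E_i) \longrightarrow \shO_Y \longrightarrow \shO_{E_i} \longrightarrow 0
\]
together with two elementary vanishings: $H^{n-1}(Y,\shO_Y)=0$, which holds because $Z$ is affine and $R^{n-1}f_*\shO_Y=0$ by rationality, and $H^n\big(Y,\shO_Y(-E_i)\big)=0$, which holds because the fibers of $f$ have dimension at most $n-1$. The long exact sequence then gives $H^{n-1}(E_i,\shO_{E_i})=0$ immediately.

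By contrast, you first establish the stronger statement $H^{n-1}(E,\shO_E)=0$ via the theorem on formal functions and a Mittag--Leffler argument on the tower of thickenings $E^{(m)}$, and then restrict to each component. This works, but it brings in the formal functions machinery and the cofinality comparison of $\mathcal{I}_E^m$ with $(f^{-1}\mathfrak{m}_p)^m$, neither of which is needed. It also forces you to assume at this stage that the singularity is isolated (so that $E$ is the reduced fiber over a single point), whereas the paper's argument applies verbatim to any affine $Z$ with rational singularities. Note, incidentally, that your intermediate conclusion $H^{n-1}(E,\shO_E)=0$ can itself be obtained by the paper's one-line method, just replacing $E_i$ by $E$ in the sequence above; so even if one wanted that stronger statement, formal functions is not required.
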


\begin{proof}
Since $Z$ has rational singularities, we have 
$$H^{n-1}(Y,\shO_Y)=0,$$ 
while the fact that $f$ has fibers of dimension $\leq n-1$ implies 
$$H^{n}\big(Y, \shO_{Y}(-E_i)\big)=0\quad\text{for all}\quad i.$$
Passing to cohomology in the short exact sequence
$$0\longrightarrow  \shO_Y(-E_i)\longrightarrow \shO_Y\longrightarrow \shO_{E_i}\longrightarrow 0$$
implies then the statement.
\end{proof}

Consider now on $Y$ the residue short exact sequence 
$$0\longrightarrow\Omega_Y\longrightarrow\Omega_Y (\log E)\longrightarrow\bigoplus_{i=1}^d\shO_{E_i}
\longrightarrow 0.$$
It follows from the corresponding long exact sequence and Lemma~\ref{vanishing_O}  that 
we can rephrase the vanishing predicted by Conjecture~\ref{conj_vanishing} (when $Z$ has rational singularities),
as follows:

\begin{proposition}\label{reformulation2}
With the above notation, we have
$$H^{n-1}\big(Y, \Omega_Y(\log E)\big)=0$$
if and only if the connecting homomorphism
$$
%\begin{equation}\label{map_alpha}
\alpha\colon \bigoplus_{i=1}^d H^{n-2}(E_i,\shO_{E_i})\longrightarrow H^{n-1}(Y, \Omega_Y)
%\end{equation}
$$
is surjective.
\end{proposition}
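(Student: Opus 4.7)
The statement is essentially a direct consequence of the long exact cohomology sequence associated with the residue short exact sequence already displayed in the excerpt, together with \lemmaref{vanishing_O}. My plan is therefore to apply these two tools and read off the equivalence.

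First I would take the long exact sequence in cohomology arising from
$$0 \longrightarrow \Omega_Y \longrightarrow \Omega_Y(\log E) \longrightarrow \bigoplus_{i=1}^d \shO_{E_i} \longrightarrow 0,$$
focusing on the portion in degrees $n-2$ and $n-1$:
$$\bigoplus_{i=1}^d H^{n-2}(E_i,\shO_{E_i}) \xrightarrow{\alpha} H^{n-1}(Y,\Omega_Y) \longrightarrow H^{n-1}\bigl(Y,\Omega_Y(\log E)\bigr) \longrightarrow \bigoplus_{i=1}^d H^{n-1}(E_i,\shO_{E_i}).$$
The key observation is that the last term vanishes: by \lemmaref{vanishing_O}, each summand $H^{n-1}(E_i,\shO_{E_i})$ is zero. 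Consequently the map $H^{n-1}(Y,\Omega_Y) \to H^{n-1}(Y,\Omega_Y(\log E))$ is surjective, and its kernel is exactly the image of $\alpha$.

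From this it is immediate that $H^{n-1}(Y,\Omega_Y(\log E))$ is isomorphic to the cokernel of $\alpha$, so it vanishes if and only if $\alpha$ is surjective, which is the desired equivalence.

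There is essentially no obstacle here, since the result is a formal consequence of \lemmaref{vanishing_O} and the long exact sequence; the substantive input was the rational singularities hypothesis used to prove that lemma. The only point to mention for completeness is that we are using the fact (implicit in having fixed the affine setup) that the cohomology groups appearing are computed on $Y$ rather than their higher direct images, so the long exact sequence is the correct one to invoke.
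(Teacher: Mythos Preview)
Your proof is correct and follows exactly the paper's approach: the paper likewise derives the proposition directly from the long exact sequence of the residue short exact sequence together with \lemmaref{vanishing_O}, with no additional ingredients.
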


\subsection{A vanishing theorem for log canonical pairs}\label{GKKP}
We continue to assume that $Z$ has rational singularities. 
In this case, the Steenbrink-type vanishing theorem
\cite[Theorem~14.1]{GKKP} gives
\begin{equation}\label{eqn3_theorem}
H^{n-1}\big(\Omega_Y(\log E)\otimes\shO_Y(-E)\big)=0.
\end{equation}
We note that the result in \emph{loc. cit.} is stated for log canonical pairs $(Z,D)$. However, when $D=0$,
the result also holds if we only assume that $Z$ has Du Bois singularities (this is the only condition that is used in the
proof, via \cite[Theorem~13.3]{GKKP}). In our case this condition is satisfied since rational singularities are Du Bois by \cite[Theorem~S]{Kovacs}.

Note that we also have
\begin{equation}\label{eqn7_theorem}
H^n\big(\Omega_Y(\log E)\otimes\shO_Y(-E)\big)=0,
\end{equation}
due to the fact that all fibers of $f$ have dimension $\leq n-1$. These two vanishing statements will be used 
later in combination with Proposition \ref{reformulation2}.

\subsection{A complex describing $\Omega_Y(\log E) (-E)$}
In order to make use of Proposition \ref{reformulation2} we will need the following, likely familiar to experts:

\begin{lemma}\label{general_exact_sequence}
Suppose that $E=\sum_{i=1}^d E_i$ is a simple normal crossing divisor on the smooth, $n$-dimensional variety $Y$.
If for every $J\subseteq\{1,\ldots,d\}$ we denote
$$E_J:=\bigcap_{i\in J}E_i,$$
then there is an exact complex
$$0\to \Omega_{Y}(\log E)\otimes\shO_Y (-E)\to {\mathcal C}^0=\Omega_{Y}\overset{d^1}\to {\mathcal C}^1\to\ldots\overset{d^{n-2}}\to {\mathcal C}^{n-1}\to 0,$$
where
$${\mathcal C}^p=\bigoplus_{|J|=p}\Omega_{E_J} \quad \text{for all} \quad 1\leq p\leq n-1,$$
and the maps $d^i$ are induced, up to sign, by the obvious restriction maps.
\end{lemma}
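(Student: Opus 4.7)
The plan is to construct the differentials as signed Čech-type restriction maps and verify exactness by a local calculation that splits the complex along coordinate directions. The first map $\Omega_Y(\log E)\otimes \shO_Y(-E)\to \Omega_Y$ is the natural inclusion, which is well-defined because, locally, multiplication by a defining equation of $E$ takes $\Omega_Y(\log E)$ into $\Omega_Y$. The differentials $d^p\colon \bigoplus_{|J|=p}\Omega_{E_J}\to \bigoplus_{|J|=p+1}\Omega_{E_J}$ are defined by signed sums of the restriction maps $\Omega_{E_J}\to \Omega_{E_{J'}}$ for $J\subset J'$ with $|J'|=|J|+1$, and the relation $d^{p+1}\circ d^p=0$ is formal, exactly as for the Čech complex.

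Exactness is local, so I would work near a point $y\in Y$ in coordinates $z_1,\ldots,z_n$ chosen so that the components of $E$ passing through $y$ are exactly $E_j=\{z_j=0\}$ for $j=1,\ldots,k$. Components not passing through $y$ contribute trivially near $y$, so I may assume $d=k$. The decomposition $\Omega_Y=\bigoplus_{i=1}^n \shO_Y\, dz_i$ induces a direct sum decomposition of the entire complex into ``$dz_i$-subcomplexes''. Using the conormal identification $\Omega_{E_J}=\bigoplus_{i\notin J}\shO_{E_J}\,dz_i$, together with the explicit local description
\[
\Omega_Y(\log E)\otimes \shO_Y(-E) \;=\; \bigoplus_{i\le k}\Bigl(\prod_{j\le k,\,j\ne i} z_j\Bigr) dz_i \;\oplus\; \bigoplus_{i>k}\Bigl(\prod_{j\le k} z_j\Bigr) dz_i,
\]
the $i$-th subcomplex takes the form
\[
0\to \prod_{j\in A_i}(z_j)\to \shO_Y\to \bigoplus_{j\in A_i}\shO_Y/(z_j)\to \bigoplus_{\substack{J\subseteq A_i\\ |J|=2}}\shO_Y/(z_j : j\in J)\to\cdots,
\]
where $A_i=\{1,\ldots,k\}\setminus\{i\}$ when $i\le k$ and $A_i=\{1,\ldots,k\}$ when $i>k$.

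The proof then reduces to showing that this augmented Čech-type complex attached to the regular sequence $(z_j)_{j\in A_i}$ is exact. Exactness at $\shO_Y$ amounts to the identity $\bigcap_{j\in A_i}(z_j)=\prod_{j\in A_i}(z_j)$, a standard fact for regular sequences; exactness at the next spot is a Chinese remainder statement for the ideals $(z_j)$; and exactness at subsequent spots follows by induction on $|A_i|$, using the short exact sequence $0\to \shO_Y(-E_m)\to \shO_Y\to \shO_{E_m}\to 0$ to peel off one component at a time. Conceptually, this complex resolves the structure sheaf of the union $\bigcup_{j\in A_i}E_j$ and its exactness reflects the contractibility of the nerve of the closed cover of this union by its irreducible components.

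The main obstacle is really just bookkeeping: identifying the local description of $\Omega_Y(\log E)\otimes \shO_Y(-E)$, matching it against the $dz_i$-decomposition, and keeping track of the Čech signs so the restriction maps assemble into an honest complex. The exactness of the augmented Čech complex for an SNC configuration is a standard inclusion--exclusion fact, and I expect no serious conceptual difficulty beyond the careful indexing.
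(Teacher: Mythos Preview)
Your proposal is correct and follows essentially the same route as the paper: reduce locally and split the complex as a direct sum of subcomplexes indexed by the coordinate differentials $dz_i$, each of which is the augmented \v{C}ech-type resolution attached to the regular sequence $(z_j)_{j\in A_i}$. The paper verifies the exactness of these subcomplexes slightly differently---first reducing to $Y=\AAA^d$ via flat pullback along $(z_1,\ldots,z_d)$ and then using the $\NN^d$-grading to identify each graded piece with the reduced simplicial cochain complex of a full simplex---but this is a cosmetic variation on your inductive argument.
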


\begin{proof}
This is a local assertion, hence we may assume that we have an algebraic  system of coordinates $x_1,\ldots,x_n$ on $Y$ such that
$E_i$ is defined by $x_i$ for $1\leq i\leq d$. The coordinates $x_1,\ldots,x_d$ define a smooth map $\varphi\colon Y\to\AAA^d$ such that $E=\varphi^*H$,
where $H$ is the sum of the coordinate hyperplanes. Since exactness is preserved by flat pull-back, it is enough to prove the lemma when $Y=\AAA^d$ and
$E_i$ is defined by $x_i$. 

In this case, all the terms in the complex carry a natural $\NN^d$-grading (where each $dx_i$ has degree $0$), with the maps preserving the grading. Therefore it is enough to check exactness in each degree.
Note that the kernel of 
$$\Omega_{\AAA^d}\longrightarrow \bigoplus_{i=1}^d\Omega_{E_i}$$
consists of those $\sum_if_idx_i$ such that $f_i$ is divisible by $x_j$ for every $j\neq i$. Therefore this kernel is precisely
$\Omega_{Y}(\log E)\otimes\shO_Y (-E)$. Consequently we only need to check the exactness of the complex in the lemma at each 
${\mathcal C}^i$, with $1\leq i\leq d-1$.

Let's consider $u=(u_1,\ldots,u_d)\in\NN^d$. Note that
$${\mathcal C}^p_u=\bigoplus_J\bigoplus_j\CC x^udx_j,$$
where the sum is taken over those subsets $J\subseteq \{1,\ldots,d\}$ with $|J|=p$
and such that $u_i=0$ for all $i\in J$, and over all $j\not\in J$. 
Equivalently, $j$ runs over $\{1,\ldots,d\}$ and for every $j$, the set $J$ varies over the
subsets of $\{i\in\{1,\ldots,d\}\mid u_i=0\}\smallsetminus \{j\}$ with $p$ elements.
We thus see that the degree $u$ component of the complex
$$0\longrightarrow {\mathcal C}^0\longrightarrow {\mathcal C}^1\longrightarrow\ldots\longrightarrow {\mathcal C}^{d-1}\longrightarrow 0$$
is a direct sum of $d$ complexes, each of them isomorphic to the complex computing the reduced simplicial cohomology
of the full simplicial complex on a suitable set. Each such complex has no cohomology in positive degrees (and it has cohomology 
in degree $0$ if and only if the corresponding set is empty). This proves the exactness of the complex in the lemma at each 
${\mathcal C}^i$, for $i\geq 1$.
\end{proof}

\subsection{The proof of Conjecture~\ref{conj_vanishing} for $n=2$}
When the dimension of $Z$ is $2$, the required vanishing 
is easy to obtain using the reformulation in Proposition~\ref{reformulation2}. In this case the complex in Lemma~\ref{general_exact_sequence} is simply
 $$0\longrightarrow \Omega_Y(\log E)\otimes\shO_Y (-E)\longrightarrow \Omega_Y\longrightarrow\bigoplus_{i=1}^d\Omega_{E_i}\longrightarrow 0.$$
 Using  (\ref{eqn3_theorem}) and (\ref{eqn7_theorem}), we see that the induced map
 \begin{equation}\label{eq_isom}
 \beta\colon H^1(Y, \Omega_Y)\longrightarrow \bigoplus_{i=1}^d H^1(E_i,\Omega_{E_i})
 \end{equation}
 is an isomorphism. 
 
 On the other hand, note that $\alpha$ in Proposition~\ref{reformulation2} 
 maps the element $1\in H^0(E_i, \shO_{E_i})\simeq\CC$ to
the class ${\rm cl}(E_i)$, that is to the image of $\shO(E_i)$ via the map
$$\Pic(Y)\longrightarrow H^1(Y,\Omega_Y)$$
induced by $\shO_Y^*\to\Omega_Y$, $u\to {\rm dlog}(u)$. 
Furthermore, it is well-known (see, for example, \cite[Exercise~V.1.8]{Hartshorne}) that the image of
${\rm cl}(E_i)$ in $H^1(E_j,\Omega_{E_j})\simeq \CC$ is the intersection product $(E_i\cdot E_j)$. We conclude that, via the isomorphism (\ref{eq_isom}), 
the map $\alpha$
is given by the matrix $(E_i\cdot E_j)_{1\leq i,j\leq d}$. The fact that this matrix is non-singular (in fact, negative definite) is a well-known consequence of
the Hodge Index theorem.

\subsection{The set-up in higher dimension}
From now on we assume that $n\geq 3$. We also assume that $Z$ has isolated singularities and in fact,
after restricting to suitable affine open subsets, that $Z_{\rm sing}$ is a point 
and that $E$ lies over it. 
In particular all $E_i$ are smooth projective varieties, of dimension $n-1$. We consider the morphism 
$$\beta\colon H^{n-1}(Y, \Omega_Y)\longrightarrow \bigoplus_{i=1}^d H^{n-1}(E_i,\Omega_{E_i})$$
induced by the map ${\mathcal C}^0\to {\mathcal C}^1$ in Lemma~\ref{general_exact_sequence}. 
For $p\geq 1$, we also consider 
$$\Mmod^p:={\rm ker}(d^{p+1})\subseteq {\mathcal C}^p.$$
The vanishing statements (\ref{eqn3_theorem}) and (\ref{eqn7_theorem}) imply that the map ${\mathcal C}^0\to \Mmod^1$
induces an isomorphism 
\begin{equation}\label{eq1_comments}
H^{n-1}(Y,\Omega_Y)\simeq H^{n-1}(Y,\Mmod^1).
\end{equation}
Note that for every $p$ we have
$$\dim {\rm Supp}(\Mmod^p)\leq \dim {\rm Supp}({\mathcal C}^p)=n-p.$$
In particular, from the exact sequence
$$0\to\Mmod^1\to {\mathcal C}^1\to\Mmod^2\to 0$$
 we deduce that the induced morphism 
$$\varphi^1\colon H^{n-1}(Y,\Mmod^1)\to H^{n-1}(Y,{\mathcal C}^1)$$
is surjective. By combining this with (\ref{eq1_comments}), we conclude that $\beta$ is always surjective. 

On the other hand, it follows from Poincar\'{e} duality and Hodge symmetry that for every $i$ with $1\leq i\leq d$, we have
$$h^{n-2}(E_i,\shO_{E_i})=h^{0,n-2}(E_i)=h^{n-1,1}(E_i)=h^{1,n-1}(E_i)=h^{n-1}(E_i,\Omega_{E_i}).$$
Therefore the source and target of $\beta\circ\alpha$ have the same dimension. We deduce the following:

\begin{lemma}\label{equivalence}
With the above notation, the following are equivalent:
\begin{enumerate}
\item[i)] $\alpha$ is surjective.
\item[ii)] $\alpha$ and $\beta$ are isomorphisms.
\item[iii)] $\alpha$ and $\beta$ are injective.
\end{enumerate}
\end{lemma}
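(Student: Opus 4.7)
The plan is to exploit the two facts already in hand: (a) $\beta$ is always surjective, and (b) $\dim_{\CC}\bigoplus_i H^{n-2}(E_i,\O_{E_i}) = \dim_{\CC}\bigoplus_i H^{n-1}(E_i,\Omega_{E_i})$, i.e.\ the source and target of $\beta\circ\alpha$ are finite-dimensional $\CC$-vector spaces of the same dimension. Everything will then follow by elementary linear algebra once we track how surjectivity and injectivity propagate through the composition $\beta\circ\alpha$.

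The implications (ii) $\Rightarrow$ (i) and (ii) $\Rightarrow$ (iii) are tautological, so only (i) $\Rightarrow$ (ii) and (iii) $\Rightarrow$ (ii) require argument. For (i) $\Rightarrow$ (ii), I would argue as follows: if $\alpha$ is surjective, then since $\beta$ is already known to be surjective, the composition $\beta\circ\alpha$ is surjective between two spaces of the same finite dimension, hence an isomorphism. This forces $\alpha$ to be injective, so $\alpha$ is an isomorphism as well; writing $\beta=(\beta\circ\alpha)\circ\alpha^{-1}$, we conclude that $\beta$ is an isomorphism.

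For (iii) $\Rightarrow$ (ii), the idea is essentially dual: if $\beta$ is injective, then combined with its automatic surjectivity it is already an isomorphism. In particular, $\dim H^{n-1}(Y,\Omega_Y)=\dim \bigoplus_i H^{n-1}(E_i,\Omega_{E_i})$, which by the Poincar\'e duality plus Hodge symmetry identity used just before the lemma coincides with $\dim \bigoplus_i H^{n-2}(E_i,\O_{E_i})$. An injective linear map $\alpha$ between equidimensional finite-dimensional spaces is then automatically an isomorphism.

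There is no real obstacle here: the content of the lemma lies entirely in the prior preparations, namely the identification \eqref{eq1_comments}, the surjectivity of $\varphi^1$ on $H^{n-1}$ coming from the dimension bound on $\Supp(\Mmod^p)$, and the Hodge-theoretic dimension match on the smooth projective divisors $E_i$. Once those are assumed, the equivalence reduces to a two-line linear-algebra fact about a composition of maps between finite-dimensional vector spaces of prescribed dimensions.
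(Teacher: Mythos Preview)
Your proof is correct and follows the same approach as the paper. The paper's proof only spells out (i) $\Rightarrow$ (ii) by the identical argument---$\beta\circ\alpha$ surjective between equidimensional spaces forces it to be an isomorphism, hence $\alpha$ is injective and $\beta$ is an isomorphism---and dismisses the remaining implications as ``clear''; your treatment of (iii) $\Rightarrow$ (ii) simply makes explicit what the authors left to the reader.
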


\begin{proof}
Note that if $\alpha$ is surjective, since $\beta$ is also surjective, we conclude that $\beta\circ\alpha$ is surjective as well,
hence it is an isomorphism. This implies that $\alpha$ is injective, hence an isomorphism, and therefore $\beta$ is an isomorphism as well.
The other implications are clear.
\end{proof}

\subsection{The map $\beta$.}\label{scn:beta}
In order to simplify the notation, we define 
$$\displaystyle E(p) := \coprod_{|J| = p }E_J,$$ 
with the convention that $E(0) = Y$. Thus in Lemma \ref{general_exact_sequence} we have $\C^p = \Omega_{E(p)}$
for $0\leq p\leq n-1$. We reinterpret the map $\beta$ as
$$\beta\colon H^{n-1}(Y, \Omega_Y) \longrightarrow H^{n-1}(E(1), \Omega_{E(1)}).$$

\begin{proposition}\label{beta}
With the above notation, if $n\geq 3$, then $\beta$
is an isomorphism.
\end{proposition}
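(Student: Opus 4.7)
The plan is to analyze the hypercohomology spectral sequence for the exact complex $\K^\bullet$ of Lemma \ref{general_exact_sequence}, namely
\[
E_1^{p,q} = H^q(E(p), \Omega_{E(p)}) \;\Longrightarrow\; \HH^{p+q}(Y, \K^\bullet) \simeq H^{p+q}(Y, \Omega_Y(\log E)(-E)),
\]
with the convention $E(0) = Y$, and in which the $d_1^{0,n-1}$-differential is exactly $\beta$. Because $\dim E(p) = n - p$ under the isolated singularity hypothesis, $E_1^{p,q}=0$ for $q > n-p$, so row $q=n-1$ has nonzero entries only at $p=0,1$, and row $q=n$ reduces to the single term $E_1^{0,n}=H^n(Y,\Omega_Y)$, which vanishes by the fiber-dimension bound and affineness of $Z$. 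Combined with the Steenbrink-type vanishings \eqref{eqn3_theorem} and \eqref{eqn7_theorem}, the abutment is zero in degrees $n-1$ and $n$. In particular $E_2^{0,n-1}=\ker\beta$ and $E_2^{1,n-1}=\coker\beta$; the latter vanishes by the surjectivity argument recalled before Lemma \ref{equivalence}.

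For injectivity one needs $E_2^{0, n-1}=0$. Since no differential lands at $(0,n-1)$, the groups $E_r^{0, n-1}$ for $r \ge 2$ form a decreasing tower with limit $E_\infty^{0, n-1} = 0$, cut down by the outgoing differentials $d_r \colon E_r^{0, n-1} \to E_r^{r, n-r}$, and it suffices to show that each such $d_r$ is zero. The plan is to do so by a weight argument. For $p\ge 1$, the term $E_1^{p,q}=H^q(E(p),\Omega_{E(p)})$ is the Hodge component $H^{1,q}(E(p))$ inside the pure Hodge structure on $H^{q+1}(E(p),\CC)$, hence pure of weight $q+1$ on the smooth projective variety $E(p)$. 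The remaining term $E_1^{0,n-1}=H^{n-1}(Y,\Omega_Y)$ carries a mixed Hodge structure, arising for instance from Saito's theory of mixed Hodge modules applied to $f\colon Y\to Z$ over the affine base $Z$, whose weights are $\ge n$. The differentials at $E_1$ are pullbacks along closed immersions of smooth varieties and are therefore morphisms of MHS; the same holds for all induced differentials on higher pages.

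With this at hand, for $r\ge 2$ the target $E_r^{r,n-r}$ is a subquotient of the pure Hodge structure $E_1^{r,n-r}$ of weight $n-r+1\le n-1$, while the source $E_r^{0,n-1}$ is a subquotient of a MHS with weights $\ge n$. Strictness of morphisms of mixed Hodge structures then forces $d_r=0$, yielding $E_2^{0,n-1}=E_\infty^{0,n-1}=0$ and hence the injectivity of $\beta$.

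The main obstacle is the Hodge-theoretic setup. One must verify that the spectral sequence, and not just its $E_1$-page, is a spectral sequence of mixed Hodge structures; concretely, this amounts to exhibiting $\K^\bullet$ as (part of) a cohomological mixed Hodge complex in Deligne's sense, or equivalently as a complex of mixed Hodge modules in Saito's sense, so that the MHS on each term is compatible with the differentials. Once that is in place, the weight comparison above handles all higher differentials uniformly.
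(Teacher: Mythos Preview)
Your spectral-sequence framework is set up correctly, and the reduction to showing that the higher differentials $d_r\colon E_r^{0,n-1}\to E_r^{r,n-r}$ vanish for $r\ge 2$ is valid. The problem is the weight argument you invoke to kill these differentials. You assert that $E_1^{0,n-1}=H^{n-1}(Y,\Omega_Y)$ carries a mixed Hodge structure of weights $\ge n$, ``arising for instance from Saito's theory''; but this is not so. Since $Y$ is only quasi-projective, $H^{n-1}(Y,\Omega_Y)$ is \emph{not} the graded piece $\Gr_F^1 H^n(Y,\CC)$ of Deligne's mixed Hodge structure (that graded piece is $H^{n-1}(\overline{Y},\Omega_{\overline{Y}}(\log B))$ for a smooth compactification with boundary $B$). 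Nor does Saito's theory produce a mixed Hodge structure on it: passing to $\Gr_F$ of a mixed Hodge module yields a coherent sheaf, not an object with a weight filtration. Consequently there is no a priori reason for $\K^\bullet$ to underlie a cohomological mixed Hodge complex, and the obstacle you flag at the end is a genuine one rather than a routine verification; I do not see how to carry it out.

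The paper sidesteps exactly this difficulty. Using the vanishings \eqref{eqn3_theorem} and \eqref{eqn7_theorem} it first replaces $H^{n-1}(Y,\Omega_Y)$ by $H^{n-1}(Y,\Mmod^1)\simeq \HH^{n-1}(Y,\C^{\ge 1})$, thereby discarding the troublesome term $\C^0=\Omega_Y$. The truncated complex $\C^{\ge 1}$ is supported on the \emph{compact} divisor $E$ and is literally $\Gr_F^1$ of the cohomological mixed Hodge complex computing $H^{\ast}(E)$; this gives $H^{n-1}(Y,\Mmod^1)\simeq \Gr_F^1 H^n(E)$ (Lemma~\ref{lemma_hodge_filtration}). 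Steenbrink's purity result (Lemma~\ref{lemmaps}) then says $H^n(E)$ is pure of weight $n$, so $\Gr_F^1 H^n(E)\simeq H^{1,n-1}(E(1))$, and a dimension count finishes the proof. In other words, the weight input you want \emph{is} available, but only on the cohomology of $E$, and one must first pass from $\Omega_Y$ to $\Mmod^1$ to access it; once that identification is made, the separate analysis of higher differentials becomes unnecessary.
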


Before giving the proof of the proposition, we make some preparations. All cohomology groups below are considered to 
be with complex coefficients.
Recall that  for a simple normal crossing divisor $E$ as above, the weight $k$ piece of the mixed Hodge structure 
on the cohomology of $E$ can be computed using the complex
$$ 0 \longrightarrow H^k\big(E(1)\big) \stackrel{\delta_1}{\longrightarrow} H^k\big(E(2)\big) \stackrel{\delta_2}{\longrightarrow} \cdots \stackrel{\delta_l}{\longrightarrow} 
H^k\big(E(l+1)\big) \stackrel{\delta_{l+1}}{\longrightarrow} \cdots.$$
(See e.g. \cite[Part II, 1]{elz83}.)
More precisely, we have  $${\rm Gr}^W_k H^{k+l}(E) = \ker\delta_{l+1}/\im{\delta_l}.$$ 
The Hodge space $H^{p,q}\big({\rm Gr}^W_k H^{k+l}(E)\big)$  is obtained by applying $H^{p,q}$ to this complex and passing 
to cohomology, as above.  

The following result of Steenbrink \cite[Corollary 1.12]{St83} is crucial in what follows:

\begin{lemma}\label{lemmaps}
Let $Z$ be an algebraic variety of dimension $n$ with an isolated singularity $x\in Z$. If $f \colon Y \to Z$ is a resolution 
such that $f^{-1}(x) = E$ is a simple normal crossing divisor and $f$ is an isomorphism over $Z\smallsetminus\{x\}$, then
 $${\rm Gr}_r^W H^k(E)= 0 \quad \text{ for } r\neq k \quad \text{  if } \quad k\geq n.$$
In other words,  $H^k(E)$ has a pure Hodge structure of weight $k$ for $k \ge n$.
\end{lemma}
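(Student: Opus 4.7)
The plan is to work in a sufficiently small contractible (Stein) analytic neighborhood of $x$ in $Z$ and let $Y$ denote its preimage. Then $Y$ is a smooth analytic neighborhood of $E$ that deformation retracts onto $E$, so $H^k(Y, \QQ) \cong H^k(E, \QQ)$; moreover the complement $U := Y\setminus E$ is homotopy equivalent to the link $L_x$ of $x$ in $Z$. The central tool is the long exact sequence of mixed Hodge structures associated to the pair $(Y, U)$,
\begin{equation*}
\cdots \longrightarrow H^{k-1}(U) \longrightarrow H^k_E(Y) \longrightarrow H^k(E) \longrightarrow H^k(U) \longrightarrow H^{k+1}_E(Y) \longrightarrow \cdots,
\end{equation*}
where all MHS are the standard ones coming from the Deligne/Saito/Durfee--Navarro-Aznar framework for punctured neighborhoods of SNC divisors.

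The proof rests on three weight estimates: (i) $H^k(E)$ has weights in $[0, k]$, as $E$ is proper (Deligne); (ii) Poincar\'e--Verdier duality gives an isomorphism of MHS $H^k_E(Y) \cong H^{2n-k}(E)^{\vee}(-n)$, which combined with (i) places the weights of $H^k_E(Y)$ in $[k, 2n]$; and (iii) for $k \geq n$, the weights of $H^k(U) \cong H^k(L_x)$ lie in $[k+1, 2k]$. The bound (iii) is the crucial input: it is the classical refinement, due to Steenbrink and Durfee, of Deligne's bound on the cohomology of the complement of a SNC divisor, and is specific to links of isolated singularities.

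Granting (iii), the argument finishes in one step. For $k \geq n$, the map $H^k(E) \to H^k(U)$ is a morphism of MHS whose source has weights $\leq k$ and whose target has weights $\geq k+1$; such a morphism must vanish. Consequently $H^k_E(Y) \to H^k(E)$ is surjective, and by (ii) the weights of $H^k(E)$ are then $\geq k$; combined with (i) this yields purity of weight $k$, that is $\Gr^W_r H^k(E) = 0$ for $r \neq k$ and $k \geq n$. The main obstacle is the link weight bound (iii); the standard proof is itself a weight argument, running Deligne's weight spectral sequence for $U$ and invoking Poincar\'e duality on the compact oriented real $(2n{-}1)$-manifold $L_x$ to transport the easy low-degree bound (weights of $H^m(L_x)$ in $[0, m]$ for $m \leq n{-}1$) into the high-degree statement we need.
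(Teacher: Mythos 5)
The paper does not actually prove this lemma: it quotes it verbatim from Steenbrink \cite[Corollary~1.12]{St83}. Your weight bookkeeping is correct as far as it goes: $H^k(E)$ has weights $\le k$ because $E$ is proper; $H^k_E(Y)\simeq H^{2n-k}(E)^\vee(-n)$ has weights $\ge k$; and once the restriction $H^k(E)\to H^k(U)$ vanishes for $k\ge n$, strictness of MHS morphisms gives surjectivity of $H^k_E(Y)\to H^k(E)$ and hence purity, exactly as you say. So if you take your input (iii) --- semipurity of the link, i.e.\ weights of $H^k(L_x)$ at least $k+1$ for $k\ge n$ --- as a quoted theorem, your argument is a correct derivation of the lemma, and it is in essence the route by which Steenbrink himself obtains \cite[Corollary~1.12]{St83}; citing semipurity (Steenbrink \cite{St83}, Durfee, Navarro Aznar) would put you on the same footing as the authors, who simply cite the corollary.

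The genuine gap is your claim that (iii) follows easily by Poincar\'e duality on $L_x$ from an ``easy low-degree bound'' that $H^m(L_x)$ has weights in $[0,m]$ for $m\le n-1$. That bound is not easy; it is the entire content of semipurity. Already in the simplest case --- the cone over a smooth projective $(n-1)$-fold $V$, resolved by blowing up the vertex --- the Gysin sequence of the circle bundle identifies the weight-$(m{+}1)$ part of $H^m(L_x)$ with the kernel of cup product with the hyperplane class $H^{m-1}(V)\to H^{m+1}(V)$, so the low-degree bound there is precisely Hard Lefschetz on $V$; for a general SNC exceptional divisor it requires the polarization/weight-spectral-sequence arguments of Steenbrink and Navarro Aznar, or decomposition-theorem input of the kind the paper uses elsewhere via de Cataldo--Migliorini \cite{cm05}. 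Moreover, the duality you invoke to transport the bound, $H^k(L_x)\simeq H^{2n-1-k}(L_x)^\vee(-n)$ as mixed Hodge structures (and likewise the identification $H^k(Y)\simeq H^k(E)$ of MHS for the analytic neighborhood), is itself a nontrivial part of the punctured-neighborhood formalism that must be cited; and since that duality makes the low-degree and high-degree bounds equivalent, the ``transport'' gains nothing --- the hard step is simply relocated. So either quote link semipurity outright and stop, which reproduces the paper's citation plus standard weight yoga, or supply the missing Hard Lefschetz/weight-spectral-sequence argument; as written, step (iii) is asserted rather than proved.
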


Next, in the notation of Lemma \ref{general_exact_sequence}, we set
$${\mathcal C}^{\geq 1} := {\mathcal C}^1 \to \cdots \to {\mathcal C}^{n-1} \to 0.$$ 
By definition of ${\mathcal M}^1$ in the previous section, the map
 ${\mathcal M}^1 \to {\mathcal C}^1$ induces a quasi-isomorphism 
 $$ {\mathcal M}^1 \overset{\sim}{\longrightarrow} {\mathcal C}^{\geq 1},$$
and so 
\begin{equation}\label{equation_cohomology} 
H^{n-1}(Y, {\mathcal M}^1) \simeq \HH^{n-1}(Y, {\mathcal C}^{\geq 1}). 
\end{equation}

On the other hand, we have: 

\begin{lemma}\label{lemma_hodge_filtration} 
With the notation above $$\HH^{n-1}(Y, {\mathcal C}^{\geq 1}) \simeq {\rm Gr}_F^1H^n(E).$$
\end{lemma}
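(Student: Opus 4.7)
The plan is to identify $\HH^{n-1}(Y, {\mathcal C}^{\geq 1})$ with the first Hodge graded piece ${\rm Gr}^1_F H^n(E)$ of Deligne's mixed Hodge structure on the projective simple normal crossing variety $E$, via the standard simplicial/double-complex computation. Concretely, I would use the proper hypercover $E_\bullet \to E$ with $E_p = E(p+1)$ and face maps given by the natural inclusions of strata $E_J \hookrightarrow E_{J \smallsetminus \{i\}}$. By Deligne's theory (cf.\ Hodge~III, or the presentation in Peters--Steenbrink), the associated mixed Hodge complex has $\CC$-component equal to the total complex of the double complex $\mathcal{K}^{p,q} = \Omega^q_{E(p+1)}$, where the horizontal differential is the \v{C}ech-type alternating sum of restrictions (the same operator appearing in Lemma~\ref{lemmaps}) and the vertical differential is the de Rham $d$. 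The Hodge filtration on $H^n(E)$ is induced by the stupid filtration in the $q$-direction.

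Next, I would invoke the $E_1$-degeneration of the Hodge-de Rham spectral sequence for this mixed Hodge complex---a standard part of Deligne's construction, reducing ultimately to classical $E_1$-degeneration on each smooth projective stratum $E(p+1)$. It gives
$$ {\rm Gr}^r_F H^n(E) \simeq \HH^n\bigl( E, {\rm Gr}^r_F \tot(\mathcal{K}) \bigr) \simeq \HH^{n-r}\bigl( E, \Omega^r_{E(\bullet+1)} \bigr), $$
where $\Omega^r_{E(\bullet+1)}$ is the \v{C}ech complex having $\Omega^r_{E(p+1)}$ in degree $p$. Specializing to $r = 1$, this complex is literally ${\mathcal C}^{\geq 1}$: its degree-$p$ term $\Omega^1_{E(p+1)}$ equals ${\mathcal C}^{p+1}$, consistent with the convention (forced by the quasi-isomorphism ${\mathcal M}^1 \overset{\sim}{\to} {\mathcal C}^{\geq 1}$) that places ${\mathcal C}^1$ in degree $0$. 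Since all the sheaves involved are supported on $E \subseteq Y$, hypercohomology on $E$ and on $Y$ coincide, and the identification is complete.

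The main obstacle is the bookkeeping: correctly tracking the filtration indices, the shift by $r$ that appears when extracting ${\rm Gr}^r_F$ from the total complex, and matching this with the author's degree convention on ${\mathcal C}^{\geq 1}$. Once these are aligned, there is no further conceptual content---the lemma is simply the statement that ${\mathcal C}^{\geq 1}$ is the $r = 1$ row of the double complex computing the Hodge filtration on $H^\bullet(E)$.
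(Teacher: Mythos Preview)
Your proposal is correct and follows essentially the same route as the paper: both identify ${\mathcal C}^{\geq 1}$ (up to a shift) with ${\rm Gr}_F^1$ of the total de Rham double complex $\Omega^{\bullet}_{E(\cdot)}$ computing the mixed Hodge structure on $E$, and then invoke $E_1$-degeneration of the associated Hodge--de Rham spectral sequence to pass from hypercohomology of the graded piece to the graded piece of cohomology. The only differences are cosmetic---the paper cites Elzein's exposition rather than Deligne or Peters--Steenbrink, and handles the index shift by writing ${\rm Gr}_F^1\tot(\Omega^{\bullet}_{E(\cdot)}) = {\mathcal C}^{\geq 1}[-1]$ explicitly---but the content is identical.
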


\begin{proof}
Recall that the Hodge filtration on $\tot\big(\Omega^\bullet_{E(\cdot)}\big)$ is defined as 
$$F^k = \tot(\tau^{\geq k}\Omega^\bullet_{E(\cdot)}),$$ 
where $\tot$ is the total complex associated to a double complex, and $\tau^{\ge k}$ denotes the standard truncation at the $k$-th term (see e.g. \cite[Section 3.2.4.2]{Elzeinetal}). This means that 
$${\rm Gr}_F^1\tot\big(\Omega^\bullet_{E(\cdot)}\big) = \big(0 \to \Omega^1_{E(1)} \to \cdots \to \Omega^1_{E(n-1)} \to 0\big) = {\mathcal C}^{\geq 1}[-1].$$ 
The Hodge filtration of $H^n(E, \CC)$ is defined by this filtration, together with the quasi-isomorphism 
$$ \CC_E \simeq \tot\big(\Omega^\bullet_{E(\cdot)}\big).$$ 
Moreover, the spectral sequence associated to this filtration degenerates at $E_1$ (see e.g. \cite[Theorem 3.3]{elz83}), which implies: 
$$ \HH^n\big(Y, {\rm Gr}_F^1\tot\big(\Omega^*_{E(\cdot)}\big)\big) \simeq {\rm Gr}_F^1\HH^n\big(Y, \tot\big(\Omega^*_{E(\cdot)}\big)\big).$$ 
Using the descriptions above, we deduce the isomorphism 
$$\HH^{n-1}(Y, {\mathcal C}^{\geq 1}) \simeq {\rm Gr}_F^1H^n(E).$$
\end{proof}

\begin{proof}[Proof of Proposition \ref{beta}]\footnote{We thank the referee for suggesting this approach, which is shorter and 
more conceptual than our original proof.}
We have already seen that the map $$\varphi^1: H^{n-1}(Y, {\mathcal M}^1) \to H^{n-1}(Y, {\mathcal C}^1)$$ is surjective. 
On the other hand, by (\ref{equation_cohomology}) and Lemma~\ref{lemma_hodge_filtration}, 
$$H^{n-1}(Y, {\mathcal M}^1) \simeq {\rm Gr}_F^1H^n(E).$$ 
As $H^n(E)$ has a pure Hodge structure of weight $n$ by Lemma~\ref{lemmaps}, and $E$ is $n-1$ dimensional, 
$${\rm Gr}_F^1H^n(E) \simeq H^{1,n-1}(H^n(E)).$$ 
For dimension reasons, the $H^{1,n-1}$ piece of the complex computing the weight $n$ piece of the Hodge structure on the cohomology of $E$ is 
$$0\to H^{1,n-1}(E(1)) \to 0,$$ 
hence
$$H^{1,n-1}(H^n(E)) = H^{1,n-1}({\rm Gr}^W_nH^n(E)) \simeq H^{1, n-1}(E(1)).$$  
Finally, recall that $$H^{n-1}(Y, {\mathcal C}^1) = H^{n-1}(E(1), \Omega^1_{E(1)}) \simeq H^{1,n-1}(E(1)).$$ 
Since $\varphi^1$ is a surjective morphism between two vector spaces that are abstractly isomorphic, it follows that
$\varphi^1$ is an isomorphism. As we have seen in ($\ref{eq1_comments}$),  the map $$H^{n-1}(Y, \Omega_Y)  \longrightarrow H^{n-1}(Y, \Mmod^1)$$ 
is also an isomorphism. The composition of these two maps is $\beta$, which is thus an isomorphism too.
\end{proof}

\subsection{The map $\alpha$.}
Since we have seen in Proposition \ref{beta} that $\beta$ is an isomorphism, Lemma \ref{equivalence} implies that in order to finish the proof of Theorem~\ref{main}, it suffices to show that $\alpha$ is injective. This is equivalent to the following:

\begin{proposition}\label{alpha}
The map $\beta\circ\alpha\colon H^{0,n-2}\big(E(1)\big) \to H^{1,n-1}\big(E(1)\big)$ is an isomorphism.
\end{proposition}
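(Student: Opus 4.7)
The plan is to identify the composition $\beta\circ\alpha$ with a Hodge piece of the refined intersection form on the exceptional divisor $E\subset Y$, and then to invoke its non-degeneracy via the Hodge theory of algebraic maps of de Cataldo--Migliorini \cite{cm05}, \cite{cm07}.

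First, I would unpack $\beta\circ\alpha$ explicitly. Since $\alpha$ is the connecting homomorphism of the residue sequence
\[0\to\Omega_Y\to\Omega_Y(\log E)\to\bigoplus_i \iota_{i*}\shO_{E_i}\to 0,\]
a \v{C}ech computation---or equivalently the standard description of the connecting map through the Atiyah class of $\shO_Y(E_i)$---should identify the $(i,j)$-entry
\[(\beta\circ\alpha)_{ij}\colon H^{n-2}(E_i,\shO_{E_i})\longrightarrow H^{n-1}(E_j,\Omega_{E_j})\]
as cup product with $c_1(N_{E_i/Y})$ when $i=j$, and, when $i\neq j$, as the Gysin pushforward for the inclusion $E_{ij}:=E_i\cap E_j\hookrightarrow E_j$ of the restriction $H^{n-2}(E_i,\shO_{E_i})\to H^{n-2}(E_{ij},\shO_{E_{ij}})$. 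Each component raises Hodge bigrading by $(1,1)$, as required.

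Next, applying Serre duality on each smooth projective $E_j$ of dimension $n-1$, the map $\beta\circ\alpha$ becomes equivalent to a bilinear pairing
\[\bigoplus_i H^{n-2}(E_i,\shO_{E_i})\otimes\bigoplus_j H^{0}(E_j,\Omega^{n-2}_{E_j})\longrightarrow \CC.\]
Unwinding the identifications above, this pairing is, up to signs, the restriction to type $\bigl((0,n-2),(n-2,0)\bigr)$ of the twisted intersection form on $H^{n-2}(E(1))$ given by cup product with the cycle class of $E$ inside the smooth ambient $Y$.

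Finally, to conclude non-degeneracy I would invoke Lemma~\ref{lemmaps} together with the theory of de Cataldo--Migliorini \cite{cm05}, \cite{cm07}. The purity of $H^n(E)$ from Lemma~\ref{lemmaps}, combined with the non-degeneracy of the refined intersection form on the cohomology of a fibre of a proper map---applied to our birational $f\colon Y\to Z$ and to the isolated fibre $E$ over the singular point---should imply that the pairing above is non-degenerate. Hence $\beta\circ\alpha$ is injective; since source and target have the same finite dimension, it is in fact an isomorphism. The principal obstacle is the explicit identification in the first step: it requires a careful \v{C}ech-level analysis of the residue sequence and precise bookkeeping of the Hodge bigradings, especially the contributions coming from the double intersections $E_i\cap E_j$ when there are several exceptional components. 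Once $\beta\circ\alpha$ is pinned down as the appropriate Hodge piece of the de Cataldo--Migliorini refined intersection form, purity from Lemma~\ref{lemmaps} and their non-degeneracy statement close the argument.
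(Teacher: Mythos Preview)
Your strategy is essentially the paper's: identify $\beta\circ\alpha$ with a Hodge piece of the de Cataldo--Migliorini refined intersection form on the fibre $E$, use Steenbrink's purity (Lemma~\ref{lemmaps}) to pass between cohomology of $E$ and of $E(1)$, and invoke the non-degeneracy result of \cite{cm05}.

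One technical point you must address, however: since $Z$ has been taken affine, $Y$ is not compact, so phrases like ``cup product with the cycle class of $E$ inside the smooth ambient $Y$'' and the implicit Poincar\'e duality on $Y$ are not available as stated. The paper handles this by compactifying first: one embeds $Z$ in a projective $\overline{Z}$ with $\overline{Z}\smallsetminus Z_{\rm sing}$ smooth, and extends $f$ to $g\colon\overline{Y}\to\overline{Z}$ with $\overline{Y}$ smooth projective. A commutative diagram then gives $\beta\circ\alpha=\overline{\beta}\circ\overline{\alpha}$, and the argument runs on $\overline{Y}$ where ordinary Poincar\'e duality is legitimate and the form $T\colon H_n(E)\to H^n(E)$ built from $j_*$, $j^*$ and $P.D.$ on $\overline{Y}$ is exactly the refined intersection form of \cite[Theorem~2.1.10]{cm05}. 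Without this step your pairing is not well defined.

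A second remark: your proposed \v{C}ech identification of the matrix entries $(\beta\circ\alpha)_{ij}$ is more laborious than necessary. The paper avoids it by observing directly that $\overline{\alpha}$ is a Gysin map, namely a Hodge summand of
\[
H^{n-2}\big(E(1)\big)\xrightarrow{\,P.D.\,}H_n\big(E(1)\big)\xrightarrow{\,i_*\,}H_n(\overline{Y})\xrightarrow{\,P.D.\,}H^n(\overline{Y}),
\]
so that $\overline{\beta}\circ\overline{\alpha}$ is already exhibited as the $(0,n-2)\to(1,n-1)$ piece of $k^*\circ T\circ k_*\circ P.D.$, with $k\colon E(1)\to E$. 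Then purity of $H^n(E)$ makes $k^*$ and $k_*$ isomorphisms on the relevant Hodge pieces, and one never has to examine the individual normal-bundle cups or the double-intersection contributions separately.
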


Note that since $Z$ has an isolated singularity, by possibly restricting to an open affine as before, we may assume that $Z$ is an open subset of a projective variety $\overline{Z}$ such that
$\overline{Z}\smallsetminus Z_{\rm sing}$ is smooth. Indeed, if $Z$ is affine, we may choose an open embedding $Z\hookrightarrow W$, with $W$
a projective variety. Consider a resolution of singularities $\varphi\colon V\to W\smallsetminus Z_{\rm sing}$ given by a composition of blow-ups with centers over the singular
 locus of $W\smallsetminus Z_{\rm sing}$. In particular, $\varphi$ is an isomorphism over $Z\smallsetminus Z_{\rm sing}$. 
By blowing up $W$ along the same sequence of centers, we obtain 
a projective variety $\overline{Z}$
in which $Z$ embeds as an open subset and such that $\overline{Z}\smallsetminus Z_{\rm sing}$ is smooth. 

Recall that the morphism $f\colon Y\to Z$ is a composition of blow-ups with centers lying over $Z_{\rm sing}$. By blowing up $\overline{Z}$ 
along the same sequence of centers, we obtain a smooth, projective variety $\overline{Y}$, 
with a morphism $g\colon \overline{Y}\to \overline{Z}$ which is an isomorphism over $\overline{Z}\smallsetminus Z_{\rm sing}$.
Note that $f$ is obtained by restricting $g$ to $Y=g^{-1}(Z)$.

We have a commutative diagram
$$
\begin{tikzcd}
H^{n-2}\big(E(1),\shO_{E(1)}\big)\rar{\overline{\alpha}}\dar{\rm Id} & H^{n-1}(\overline{Y},\Omega_{\overline{Y}})\dar\rar{\overline{\beta}} &
H^{n-1}\big(E(1),\Omega_{E(1)}\big)\dar{\rm Id}\\
H^{n-2}\big(E(1),\shO_{E(1)}\big)\rar{\alpha} & H^{n-1}(Y,\Omega_Y)\rar{\beta} & H^{n-1}\big(E(1),\Omega_{E(1)}\big),
\end{tikzcd}
$$
in which the middle vertical map is the pull-back induced by inclusion, and $\overline{\alpha}$, $\overline{\beta}$ are defined in the same way as  $\alpha$, $\beta$
(but considering $E$ as divisor on the variety $\overline{Y}$).

Note that the map
$$\overline{\alpha}\colon H^{n-2}\big(E(1), \shO_{E(1)}\big) \longrightarrow H^{n-1}(\overline{Y}, \Omega_{\overline{Y}})$$
 is a Gysin map. It can be seen as a direct summand in the composition
$$ H^{n-2}\big(E(1)\big) \stackrel{P.D.}{\longrightarrow} H_{n}\big(E(1)\big) \stackrel{i_*}{\longrightarrow} H_{n}(\overline{Y}) \stackrel{P.D.}{\longrightarrow} H^{n}(\overline{Y}),$$ 
where $i \colon E(1) \hookrightarrow \overline{Y}$ is the inclusion map on each of the components, and the external maps are isomorphisms given by Poincar\'e duality.

\begin{example}
We treat the case $n=3$ first. In \cite{cm07}, the authors define an intersection pairing on $H^3(E)$. Indeed, in \S2.2 in \emph{loc. cit.}
the case $l=0$, which means $E$ is a fiber as in our situation, corresponds to a pairing given by 
$$H_3(E) \stackrel{j_*}{\longrightarrow} H_3(\overline{Y}) \stackrel{P.D.}{\longrightarrow} H^3(\overline{Y}) \stackrel{j^*}{\longrightarrow} H^3(E) \simeq (H_3(E))^*$$ where 
$j\colon E \hookrightarrow \overline{Y}$ is the inclusion. Let $T \colon H_3(E) \to H^3(E)$ be this composition.
By \cite[Corollary 2.3.6]{cm07}  this pairing is nondegenerate (that is, $T$ is an isomorphism), and our task is to relate it to the cohomology of $E(1)$.

As stated earlier, and also proved in \cite{cm07}, $H^3(E)$ has a pure weight $3$ Hodge structure. Given that $E(2)$ is 1-dimensional, we obtain that the complex calculating the third graded piece of the mixed Hodge structure of $E$ is simply  $0\to H^3\big(E(1)\big) \to 0$, and therefore we get an
isomorphism $H^3(E) \simeq H^3\big(E(1)\big)$, induced by the canonical map $E(1)\to E$.

We thus conclude  that  the dual map 
$$H_3\big(E(1)\big) \simeq \big(H^3(E(1))\big)^* \longrightarrow \big(H^3(E)\big)^* \simeq H_3(E)$$ 
is also an isomorphism. Since Poincar\'e duality on each component of $E(1)$ induces an isomorphism between
$H^1(E(1))$ and $H_3(E(1))$, we finally obtain that the composition 
$$ H^1(E(1)) \stackrel{P.D.}{\longrightarrow} H_3(E(1)) \longrightarrow H_3(E) \stackrel{T}{\longrightarrow} H^3(E) \longrightarrow H^3(E(1))$$ 
is an isomorphism. The map $\beta\circ\alpha=\overline{\beta}\circ\overline{\alpha}$ is a Hodge summand of this map, hence it is an isomorphism as well.
\end{example}

In the general case, we again consider the bilinear pairing given by
$$H_{n}(E) \stackrel{j_*}{\longrightarrow} H_{n}(\overline{Y}) \stackrel{P.D.}{\longrightarrow} H^{n}(\overline{Y}) \stackrel{j^*}{\longrightarrow} H^{n}(E) \simeq \big(H_{n}(E)\big)^*,$$ 
where $j\colon E \hookrightarrow \overline{Y}$ is the inclusion, and we denote by
$T \colon H_{n}(E) \to H^{n}(E)$ the composition of these maps. 

Specializing \cite[Theorem~2.1.10]{cm05} to our particular situation of an isolated singularity says that this
pairing is nondegenerate as well, that is, $T$ is an isomorphism.
Indeed, since $E$ is compact Borel-Moore homology coincides with singular homology, and the refined intersection form 
$H_{n, 0} (E) \to H^n_0 (E)$ in \cite[Theorem~2.1.10]{cm05}, whose construction is analogous to that of $T$, is an isomorphism; here the index $0$ denotes the $0^{\rm th}$ graded quotient in the perverse filtration on the two sides. Now as described in \cite[Corollary 2.1.12]{cm05}, in the case of a log resolution of an isolated singularity, we have $H^{n}(E)= H^{n}_0(E)$. 
On the other hand, since  $H_{n,0}(E)$ is a subquotient of $H_{n}(E)$,  by dimension reasons we must have  
$H_{n,0}(E) = H_{n}(E)$ as well. Therefore in this case the theorem says precisely that the map $T$ is an isomorphism.

We are  now ready to prove the main result of the section.

\begin{proof}[Proof of Proposition \ref{alpha}]
Consider the composition
$$ H^{n-2}\big(E(1)\big) \stackrel{P.D.}{\longrightarrow} H_{n}\big(E(1)\big) \stackrel{k_*}{\longrightarrow} H_{n}(E) \stackrel{T}{\longrightarrow} H^{n}(E) 
\stackrel{k^*}{\longrightarrow} H^{n}\big(E(1)\big),$$
where $k\colon E(1) \to E$ is the inclusion on each component. In this sequence of maps, only $k_*$ and $k^*$ are potentially not isomorphisms.

Using the fact that $\dim{E(2)} = n-2$, we see that the sequence that computes the $H^{1, n-1}$ part of the weight $n$ cohomology of $E$ is
$$0\to H^{1,n-1}\big(E(1)\big) \to 0.$$
Since $H^{n}(E)$ has a pure Hodge structure of weight $n$, we conclude that 
$$H^{1,n-1}(E) \stackrel{k^*}{\longrightarrow}  H^{1,n-1}\big(E(1)\big)$$ is an isomorphism. 

We can define the dual Hodge structure on $H_{n}\big(E(1)\big)$ by transferring that on $H^{n}\big(E(1)\big)$, and we obtain that 
$$H_{-(n-1),-1}\big(E(1)\big) \stackrel{k_*}{\longrightarrow}   H_{-(n-1),-1}(E)$$ 
is an isomorphism. With respect to these Hodge structures, Poincar\'e duality is an isomorphism of degree $\big(-(n-1),-(n-1)\big)$ on $E(1)$, hence 
$H^{0,n-2}\big(E(1)\big)$ is mapped to 
$H_{-(n-1), -1}\big(E(1)\big)$.  Using that Poincar\'e duality is an isomorphism of degree $(-n,-n)$ on $\overline{Y}$ we conclude that $T$ is a map of degree $(n,n)$.  
Putting everything together, restricting the composition of maps at the beginning of the proof
 to $H^{0,n-2}\big(E(1)\big)$ gives an isomorphism with  $H^{1,n-1}\big(E(1)\big)$. But this restriction is precisely $\overline{\beta}\circ\overline{\alpha}=\beta\circ\alpha$. 
\end{proof}

This completes the proof of Theorem~\ref{main}.

\section{The proof for toric varieties}

Our goal in this section is to show that Conjecture~\ref{conj_vanishing} holds when $Z$ is a toric variety. We note that in this case
it is well known that $Z$ has rational singularities. For the basic facts about toric varieties that we use here, we refer to \cite{Fulton}.

\begin{proof}[Proof of Theorem~\ref{thm_toric}]
It follows from Lemma~\ref{lem1_conj_vanishing} that the assertion in the conjecture is independent of the resolution.
We thus choose a toric resolution of singularities $f\colon Y\to Z$, with reduced exceptional divisor $E$; note that $E$ has simple normal crossings
by default, since it is a 
torus-invariant divisor on a smooth toric variety. 
Let $D=\sum_{i=1}^sD_i$ be the sum of the non-exceptional prime torus-invariant divisors on $Y$. We consider the residue short exact sequence
\begin{equation}\label{short_exact1}
0\longrightarrow \Omega_Y(\log E)\longrightarrow \Omega_Y\big(\log (E+D)\big)\longrightarrow \bigoplus_{i=1}^s\shO_{D_i}\longrightarrow 0.
\end{equation}
Since $\Omega_Y\big(\log (E+D)\big)\simeq\shO_Y^{\oplus n}$, with $n = \dim Z$,  and $Z$ has rational singularities, it follows that
$$R^if_*\Omega_Y\big(\log (E+D)\big)=0\quad\text{for all}\quad i>0.$$
On the other hand, each $f(D_i)$ is a prime torus-invariant divisor on $Z$, hence it is a toric variety, and
$D_i\to f(D_i)$ is a resolution of singularities. 

Suppose first that $n \ge 3$. 
Since $f(D_i)$ has rational singularities, passing to higher direct images in (\ref{short_exact1}) we obtain
$$0=\bigoplus_{i=1}^s R^{n-2}f_*\shO_{D_i}\to R^{n-1}f_*\Omega_Y(\log E) \to R^{n-1}f_*\Omega_Y\big(\log (E+D)\big)=0,$$
and we conclude that $R^{n-1}f_*\Omega_Y(\log E)=0$.

Suppose now that $n=2$. In this case $Z$ has isolated singularities, hence we could apply Theorem~\ref{main}; we prefer to include a direct toric argument.
We may assume that $Z$ is affine, in which case $s=2$.
Let $v_1$ and $v_2$ be the primitive ray generators of the cone defining $Z$, corresponding to $D_1$ and $D_2$ respectively.  
Note that in this case the maps $D_i\to f(D_i)$ are isomorphisms.
If $M$ is the character lattice, then 
$$\Omega_Y\big(\log (E+D)\big)\simeq M\otimes_{\ZZ}\shO_Y,$$
and the long exact sequence in cohomology associated to (\ref{short_exact1}) gives 
$$H^0\big(Y,\Omega_Y\big(\log (E+D)\big)\big)=M\otimes_{\ZZ} H^0(Z,\shO_Z)\overset{\delta}\longrightarrow H^0(D_1,\shO_{D_1})\oplus H^0(D_2,\shO_{D_2})$$
$$\longrightarrow H^1\big(Y,\Omega_Y(\log E)\big)\longrightarrow H^1\big(Y, \Omega_Y(\log (D+E))\big)=0.$$
An easy computation shows that the map $\delta$ is given by
$$u\otimes g\mapsto (\langle u,v_1\rangle (g\circ f)\vert_{D_1},\langle u,v_2\rangle (g\circ f)\vert_{D_2}),$$
hence it is clearly surjective. This implies that $H^1\big(Y,\Omega_Y(\log E)\big)=0$, completing the proof of the theorem.
\end{proof}

\section{Application to the Hodge filtration}

\subsection{Generation level of the Hodge filtration}\label{level}
We now turn to the connection with Saito's filtration on $\shO_X(*D)$. Suppose that $X$ is a smooth complex variety of dimension $n$ and $D$ is a reduced effective divisor on $X$.
We recall that $\shO_X(*D)$ is obtained by localizing $\shO_X$ along $D$. This has a natural module structure over the sheaf of differential operators $\Dmod_X$, and as discussed in the introduction, Saito's theory
of mixed Hodge modules \cite{Saito-MHM} endows it with a Hodge filtration $F_k \shO_X (*D)$, $k \ge 0$, compatible with the order filtration on $\Dmod_X$. Recall that $F_\bullet \shO_X(*D)$ is 
generated at level $k$ if 
$$F_{\ell} \Dmod_X \cdot F_k\shO_X(*D)=F_{k+\ell}\shO_X(*D)\quad\text{for all}\quad \ell\geq 0.$$

Suppose now that $f\colon Y\to X$ is a log resolution of $(X,D)$ that is an isomorphism over $X\smallsetminus D$. If $E=(f^*D)_{\rm red}$, then
it was shown in 
\cite[Theorem~17.1]{MP} that $F_\bullet \shO_X(*D)$ is generated at level $k$ if and only if 
\begin{equation}\label{eqn1_thm}
R^qf_*\Omega^{n-q}_Y(\log E)=0 \quad\text{for all}\quad q>k.
\end{equation}
Based on this criterion, it was shown in \cite[Theorem~B]{MP} that it is always generated at level $n-2$. We will also use it here in order to relate Conjectures~\ref{conj_filtration} and \ref{conj_vanishing}.
Note that the higher-direct images that appear in (\ref{eqn1_thm}) are independent on the resolution $f$; see \cite[Corollary~31.2]{MP}.

\subsection{Proof of Theorem ~\ref{thm_filtration}}
The additional key ingredient in the proof of Theorem~\ref{thm_filtration} is a vanishing result for higher direct images in the case of normal divisors.
We assume that $n\geq 3$ and $D$ is normal. In particular, we have $\dim(D_{\rm sing})\leq n-3$. We consider a log resolution $f\colon Y\to X$
of $(X,D)$ that is a composition of blow-ups with centers contained in the inverse image of $D_{\rm sing}$, and which have simple normal crossings with
the total transform of $D$ on the corresponding model.  If $E=(f^*D)_{\rm red}$, then we write $E=\widetilde{D}+F$, where
$\widetilde{D}$ is the strict transform of $D$ and $F$ is the reduced exceptional divisor.

\begin{proposition}\label{vanishing1}
With the above notation, we have 
$$f_*\Omega^2_Y(\log F)=\Omega_X^2\quad\text{and}\quad R^qf_*\Omega_Y^2(\log F)=0\quad\text{for all}\quad q\geq 1.$$
\end{proposition}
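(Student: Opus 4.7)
The first identification $f_*\Omega^2_Y(\log F)=\Omega^2_X$ follows by a reflexivity argument. Since $D$ is normal we have $\dim D_{\mathrm{sing}}\le n-3$, so $f$ is an isomorphism over $X\smallsetminus D_{\mathrm{sing}}$, an open subset whose complement has codimension $\ge 3$ in $X$. Both sheaves agree there---the log poles along $F$ disappear away from $f^{-1}(D_{\mathrm{sing}})$---and since $\Omega^2_X$ is locally free (hence $S_2$) while any log $2$-form on $Y$ restricts to a regular $2$-form on $Y\smallsetminus F$, the sheaf $f_*\Omega^2_Y(\log F)$ is sandwiched between $\Omega^2_X$ (via pullback $f^*\Omega^2_X\subseteq \Omega^2_Y\subseteq \Omega^2_Y(\log F)$) and $j_*\Omega^2_{X\smallsetminus D_{\mathrm{sing}}}=\Omega^2_X$, and therefore equals $\Omega^2_X$.

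For the vanishing $R^qf_*\Omega^2_Y(\log F)=0$ with $q\ge 1$, the higher direct images are supported on $D_{\mathrm{sing}}$. My plan is to exploit the weight filtration on $\Omega^2_Y(\log F)$, whose graded pieces are $\Omega^2_Y$, $\bigoplus_i\Omega^1_{F_i}$, and $\bigoplus_{i<j}\shO_{F_{ij}}$, and to analyze the spectral sequence obtained by applying $Rf_*$. By Lemma~\ref{lem1_conj_vanishing}, I may replace $f$ by a composition of blow-ups of smooth centers lying over $D_{\mathrm{sing}}$; Deligne's formula for pushforward under the blow-up of a smooth center of codimension $c$ then computes $R^q f_*$ of each graded piece in closed form. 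Because every such center has codimension $\ge 3$ in its ambient smooth model (thanks to $\codim_X D_{\mathrm{sing}}\ge 3$), the contributions from adjacent graded pieces match in dimension and cancel in pairs via the Gysin/Chern-class boundary maps that constitute the spectral-sequence differentials.

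The hard part will be verifying rigorously that these boundary maps are isomorphisms in the relevant bidegrees---this is where a Hard Lefschetz / intersection-theoretic content is hiding. A cleaner route, which I would probably take in practice, is to bypass the spectral sequence by proving a birational-invariance statement analogous to \cite[Theorem~31.1]{MP}: for a proper birational morphism $f\colon Y\to X$ between smooth varieties with SNC exceptional divisor $F$ satisfying $\codim_X f(F)\ge p+1$, one has $Rf_*\Omega^p_Y(\log F)=\Omega^p_X$. Applied with $p=2$ and $\codim_X f(F)\ge 3$, this yields Proposition~\ref{vanishing1} at once, and shifts the work to establishing the invariance principle itself---which can be reduced to the single-blow-up case by dominating any two resolutions with a third.
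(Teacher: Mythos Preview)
Your reflexivity argument for $f_*\Omega^2_Y(\log F)=\Omega^2_X$ is correct and arguably cleaner than the paper's treatment, which obtains this equality as a byproduct of the inductive argument for the higher direct images.

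For the vanishing of $R^qf_*$, your second route (a birational-invariance principle for $\Omega^p(\log F)$ under the hypothesis $\codim_X f(F)\ge p+1$, reduced to a single blow-up) is exactly the strategy the paper pursues, but your reduction has a genuine gap. You assert that ``every such center has codimension $\ge 3$ in its ambient smooth model (thanks to $\codim_X D_{\mathrm{sing}}\ge 3$)''. This is false: once you have blown up and created an exceptional divisor, subsequent centers lying \emph{inside} that divisor can have codimension~$2$ in the ambient space, even though their image in $X$ sits inside $D_{\mathrm{sing}}$. So the hypothesis $\codim f(F)\ge 3$ on the global map does not descend to each factor $f_i$, and the ``single blow-up case'' of your invariance principle is not what you actually need in the induction.

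The paper repairs this by tracking log poles on the intermediate varieties and proving, for each blow-up $f_i\colon X_i\to X_{i-1}$ with accumulated exceptional divisor $F_i=f_i^*F_{i-1}+G_i$, that
\[
R{f_i}_*\Omega^2_{X_i}(\log F_i)=\Omega^2_{X_{i-1}}(\log F_{i-1}).
\]
Two cases arise. If the center $W_{i-1}$ lies in $F_{i-1}$, then its codimension may be $2$, but the result follows directly from the Esnault--Viehweg computation \cite[Lemmas~1.2, 1.5]{EV} (equivalently \cite[Theorem~31.1(i)]{MP}), which requires no codimension hypothesis precisely because the center is contained in the boundary. If $W_{i-1}\not\subseteq F_{i-1}$, then $W_{i-1}$ is the strict transform of its image in $X$ and so does have codimension $\ge 3$; here the paper introduces an auxiliary smooth divisor $T\supset W_{i-1}$ transverse to $F_{i-1}$, uses the residue sequence comparing $\Omega^2_{X_i}(\log F_i)$ with $\Omega^2_{X_i}(\log(F_i+\widetilde{T}))$, and reduces to the case already covered by Esnault--Viehweg together with \cite[Theorem~31.1(ii)]{MP} on $\widetilde{T}$. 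Your spectral-sequence/blow-up-formula approach could presumably be made to work as well, but you would still have to confront the codimension-$2$ centers, and the boundary maps you flag as ``the hard part'' are exactly what the log-pole bookkeeping above circumvents.
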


\begin{proof}
By assumption, $f$ can be written as a composition
$$Y=X_r\overset{f_r}\longrightarrow X_{r-1}\overset{f_{r-1}}\longrightarrow\ldots\overset{f_1}\longrightarrow X_0=X,$$
where $f_i$ is the blow-up of $X_{i-1}$ along $W_{i-1}$, with exceptional divisor $G_i$. We denote by $F_i$ the
exceptional divisor of $f_1\circ\ldots\circ f_i$, hence
$$F_i=f_i^*F_{i-1}+G_i.$$
Using the Leray spectral sequence, it is enough to show that for every $i$, with $1\leq i\leq r$, we have
\begin{equation}\label{eq1_vanishing1}
{f_i}_*\Omega^2_{X_i}(\log F_i)=\Omega_{X_{i-1}}^2(\log F_{i-1})\quad\text{and}
\end{equation}
$$ R^q{f_i}_*\Omega_{X_i}^2(\log F_i)=0\quad\text{for all}\quad q\geq 1.$$
If $W_{i-1}\subseteq F_{i-1}$, this follows from \cite[Lemmas 1.2 and 1.5]{EV}; cf also \cite[Theorem~31.1(i)]{MP}. 
Suppose now that $W_{i-1}\not\subseteq F_{i-1}$. In this case $W_{i-1}$
is the strict transform of its image in $X$, hence our assumption on $f$ implies that $\codim(W_{i-1},X_{i-1})\geq 3$. 
Moreover, $W_{i-1}$ has simple normal crossings with $F_{i-1}$; 
since the assertion in (\ref{eq1_vanishing1}) is local on $X_{i-1}$, we may assume that we have an algebraic system of coordinates $x_1,\ldots,x_n$ on 
$X_{i-1}$ such that $W_{i-1}$ is defined by $(x_1,\ldots,x_s)$ and each component of $F_{i-1}$ is defined by some $x_j$, with $j>s$.
Let $T$ be the divisor on $X_{i-1}$ defined by $x_1$. Consider the short exact sequence
$$0\to\Omega^2_{X_{i}}(\log F_{i})\to\Omega_{X_{i}}^2\big(\log (F_{i}+\widetilde{T})\big)\to\Omega^1_{\widetilde{T}}(\log F_{i}\vert_{\widetilde{T}})\to 0,$$
where $\widetilde{T}$ is the strict transform of $T$ on $X_i$. 
It follows from the same references as above that
$$ {f_i}_*\Omega^2_{X_i}\big(\log (F_{i}+\widetilde{T})\big)=\Omega^2_{X_{i-1}}\big(\log (F_{i-1}+T)\big)\quad\text{and}$$
$$ R^q{f_i}_*\Omega^2_{X_i}\big(\log (F_{i}+\widetilde{T})\big)=0\quad \text{for all}\quad
q\geq 1.$$
On the other hand, since $\codim(W_{i-1},X_{i-1})\geq 3$, we have that $F_{i}\vert_{\widetilde{T}}$ is the sum of the exceptional divisor of $h\colon \widetilde{T}\to T$
with the strict transform, with respect to this map, of $F_{i-1}\vert_T$. Therefore it follows from \cite[Theorem~31.1(ii)]{MP} that we have
$$h_*\Omega^1_{\widetilde{T}}(\log F_i\vert_{\widetilde{T}})=\Omega^1_T(\log F_{i-1}\vert_T)\quad\text{and}\quad
R^qh_*\Omega^1_{\widetilde{T}}(\log F_i\vert_{\widetilde{T}})=0\,\,\text{for all}\,\,q\geq 1.$$
The long exact sequence in cohomology for the above short exact sequence gives 
$$R^q{f_i}_*\Omega^2_{X_{i}}(\log F_{i})= 0\quad\text{for all}\quad q\geq 2$$
and an exact sequence
$$0\to {f_i}_*\Omega^2_{X_{i}}(\log F_{i})\to \Omega^2_{X_{i-1}}\big(\log (F_{i-1}+T)\big)\to \Omega^1_T(\log F_{i-1}\vert_T)$$
$$\to R^1{f_i}_*\Omega^2_{X_{i}}(\log F_{i})\to 0.$$
These facts imply the assertions in (\ref{eq1_vanishing1}). 
\end{proof}

With the same notation and assumptions as in Proposition~\ref{vanishing1}, consider the morphism $g\colon \widetilde{D}\to D$ induced by $f$. 
Note that since $D$ is normal, its connected components are irreducible. By hypothesis, the $f$-exceptional divisor $F$ lies over $D_{\rm sing}$,
hence $g$ is a birational morphism, with exceptional divisor $G:=F\vert_{\widetilde{D}}$ (which has simple normal crossings).

\begin{corollary}\label{cor_vanishing1}
With the above notation, the Hodge filtration on $\shO_X(*D)$ 
is generated at level $n-3$ if and only if
$R^{n-2}g_*\Omega_{\widetilde{D}}^1(\log G)=0$.
\end{corollary}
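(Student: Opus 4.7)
The plan is to combine the generation-level criterion with the residue sequence for $\widetilde{D}\subset E$ and the vanishing in Proposition~\ref{vanishing1}.

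First I recall from \parref{level} that, by \cite[Theorem~17.1]{MP}, $F_\bullet\shO_X(*D)$ is generated at level $n-3$ if and only if
$$R^qf_*\Omega^{n-q}_Y(\log E)=0\quad\text{for all}\quad q>n-3,$$
that is, for $q=n-2$ and $q=n-1$. The case $q=n-1$ is automatic by \cite[Theorem~B]{MP}, which asserts that the Hodge filtration is always generated at level $n-2$. So the corollary reduces to showing that
$$R^{n-2}f_*\Omega^2_Y(\log E)=0\quad\Longleftrightarrow\quad R^{n-2}g_*\Omega^1_{\widetilde{D}}(\log G)=0.$$

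Next I would write $E=\widetilde{D}+F$ and use the residue short exact sequence along the smooth component $\widetilde{D}$ of the simple normal crossing divisor $E$:
$$0\longrightarrow\Omega^2_Y(\log F)\longrightarrow\Omega^2_Y(\log E)\longrightarrow\Omega^1_{\widetilde{D}}(\log G)\longrightarrow 0,$$
where $G=F|_{\widetilde{D}}$ is a simple normal crossing divisor on $\widetilde{D}$ since $F$ and $\widetilde{D}$ meet transversally. Applying $Rf_*$, Proposition~\ref{vanishing1} gives
$$R^{n-2}f_*\Omega^2_Y(\log F)=0\quad\text{and}\quad R^{n-1}f_*\Omega^2_Y(\log F)=0,$$
since $n\geq 3$ forces $n-2\geq 1$. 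The associated long exact sequence then yields an isomorphism
$$R^{n-2}f_*\Omega^2_Y(\log E)\simeq R^{n-2}f_*\Omega^1_{\widetilde{D}}(\log G).$$

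Finally, since $\Omega^1_{\widetilde{D}}(\log G)$ is supported on $\widetilde{D}$ and the restriction of $f$ to $\widetilde{D}$ factors as $\iota\circ g$, with $\iota\colon D\hookrightarrow X$ the inclusion, we have
$$R^{n-2}f_*\Omega^1_{\widetilde{D}}(\log G)\simeq \iota_*R^{n-2}g_*\Omega^1_{\widetilde{D}}(\log G),$$
and the vanishing of one sheaf is equivalent to that of the other. Combining everything gives the desired equivalence. The main work in this argument is already contained in Proposition~\ref{vanishing1} and the generation-level criterion; the remaining step is the formal manipulation of the residue sequence, so I expect no serious obstacle, only the minor sanity check that $\widetilde{D}$ and $F$ are guaranteed to meet transversally in the chosen log resolution, which is built into the construction before the statement of the corollary.
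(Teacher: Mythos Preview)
Your proof is correct and follows essentially the same approach as the paper: both use the residue sequence $0\to\Omega^2_Y(\log F)\to\Omega^2_Y(\log E)\to\Omega^1_{\widetilde{D}}(\log G)\to 0$ together with Proposition~\ref{vanishing1} to identify $R^{n-2}f_*\Omega^2_Y(\log E)$ with $R^{n-2}g_*\Omega^1_{\widetilde{D}}(\log G)$, and then invoke the generation-level criterion from \parref{level}. You are slightly more explicit than the paper in handling the $q=n-1$ case and in passing from $f_*$ to $g_*$, but these are precisely the points the paper leaves implicit.
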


\begin{proof}
It follows from the discussion in \S\ref{level} that the Hodge filtration on $\shO_X(*D)$ is generated at level $n-3$ if and only if 
$$R^{n-2}f_*\Omega^2_Y(\log E)=0.$$
Consider the exact sequence
$$
0\longrightarrow\Omega_Y^2 (\log F)\longrightarrow\Omega_Y^2(\log E)\longrightarrow\Omega_{\widetilde{D}}^1(\log G)\longrightarrow 0.
$$
As a consequence of Proposition~\ref{vanishing1} we have
$$R^qf_*\Omega_Y^2(\log E)\simeq R^qg_*\Omega_{\widetilde{D}}^1(\log G)\quad\text{for every}\quad q\geq 1,$$
which implies the assertion.
\end{proof}

\begin{proof}[Proof of Theorem~\ref{thm_filtration}]
Since $D$ has rational singularities, it is normal.
We construct a log resolution $f\colon Y\to X$ of $(X,D)$ as in Proposition~\ref{vanishing1}.
Let $F$ be the exceptional divisor of $f$, and $\widetilde{D}$ the strict transform of $D$. We have
seen that the restriction $g\colon\widetilde{D}\to D$ is a resolution of $D$, with exceptional divisor $G=F\vert_{\widetilde{D}}$.
By Corollary~\ref{cor_vanishing1}, 
the Hodge filtration on $\shO_X(*D)$ 
is generated at level $n-3$ if and only if
$R^{n-2}g_*\Omega_{\widetilde{D}}^1(\log G)=0$,
which is equivalent to saying that Conjecture~\ref{conj_vanishing} holds for all connected components of $D$
(recall that by Lemma~\ref{lem1_conj_vanishing}, the assertion in Conjecture~\ref{conj_vanishing} is independent of the chosen resolution).
This shows that Conjecture~\ref{conj_vanishing} holds in the hypersurface case if and only if Conjecture~\ref{conj_filtration} does.
In particular, it follows from Theorem~\ref{main} that Conjecture~\ref{conj_filtration} holds when the divisor $D$ has isolated singularities.
\end{proof}

\section{Conjectural reduction to the case of isolated singularities}

\subsection{A conjecture on Hodge ideals and ${\mathfrak m}$-adic approximation}
If $D$ is a reduced effective divisor on the smooth complex variety $X$, then Saito's Hodge filtration on $\shO_X(*D)$ has the form
$$
%\begin{equation}\label{eq_Hodge_ideals}
F_k\shO_X(*D)=I_k(D)\otimes_{\shO_X}\shO_X\big((k+1)D\big)\quad\text{for all}\quad k\geq 0,
%\end{equation}
$$
where $I_k(D)$ is a coherent ideal in $\shO_X$, the $k^{\rm th}$ Hodge ideal of $D$. It is known, for example,
that 
$$I_0(D)=\I \big(X, (1-\epsilon)D \big)\quad\text{for}\quad 0<\epsilon\ll 1,$$
where $\I (X,\alpha D)$ is the multiplier ideal of the $\RR$-divisor $\alpha D$.
For these and other basic facts about Hodge ideals, we refer to \cite{MP}; for the definition of multiplier ideals,
see \cite[Chapter~9]{Lazarsfeld}.

We propose the following conjecture regarding the behavior of Hodge ideals with respect to ${\mathfrak m}$-adic approximation. 

\begin{conjecture}\label{conj_m_adic}
Let $D$ be a reduced effective divisor on the smooth complex variety $X$, and let $k$ be a non-negative integer.
 If $x\in X$ is a point defined by the ideal ${\mathfrak m}_x$,
then for every $r\geq 1$ there exists a positive integer $q(r)$ such that for every reduced effective divisor $E$ on $X$, with
$$\shO_X(-E)\subseteq\shO_X(-D)+{\mathfrak m}_x^{q(r)},$$ we have
$$I_k(E)\subseteq I_k(D)+{\mathfrak m}_x^r.$$
\end{conjecture}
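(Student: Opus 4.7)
The plan is to reduce the conjecture to a formal/local computation at $x$ and to exploit the log-resolution description of Hodge ideals from \cite[Theorem~A]{MP}.

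First, we work in an affine neighborhood of $x$ and fix a local equation $h\in\shO_{X,x}$ of $D$; the hypothesis then translates into the existence of a local equation $h_E$ of $E$ with $h_E\equiv u\cdot h \pmod{\mathfrak{m}_x^{q(r)}}$ for some unit $u$. Fix a log resolution $f\colon Y\to X$ of the pair $(X,D+\{x\})$, so that $f^{-1}(D)$ and $f^{-1}(x)$ together form a simple normal crossings divisor. A standard deformation argument shows that, once $q(r)$ is sufficiently large in terms of $f$, the same morphism $f$ is also a log resolution of $(X,E)$ on a neighborhood of $f^{-1}(x)$, and the reduced preimages $(f^\ast D)_{\mathrm{red}}$ and $(f^\ast E)_{\mathrm{red}}$ agree there; call this common divisor $E_Y$.

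Second, we apply \cite[Theorem~A]{MP}, which expresses $I_k(D)\otimes\shO_X((k+1)D)$ as the image of a morphism of coherent sheaves built out of $f_\ast\bigl(\omega_Y((k+1)E_Y)\otimes\mathcal{P}_k\bigr)$ and related higher-direct-image terms, where $\mathcal{P}_k$ is determined by the filtered de Rham complex of the $\Dmod_Y$-module $\shO_Y(\ast E_Y)$. Since $E_Y$ and $\mathcal{P}_k$ are intrinsic to the reduced preimage $E_Y$, they are literally the same for $D$ and for $E$; the only discrepancy between the two output ideals comes from identifying $f^\ast D$ versus $f^\ast E$ as Cartier divisors on $Y$, a difference cut out by a section in $f^{-1}(\mathfrak{m}_x^{q(r)})\cdot\shO_Y$.

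Third, we push this discrepancy back down using an Artin--Rees-type estimate: for any coherent sheaf $\mathcal F$ on $Y$ there is a constant $c$ depending on $(Y,f,\mathcal F)$ with
$$f_\ast\bigl(f^{-1}\mathfrak{m}_x^N\cdot\mathcal F\bigr)\subseteq \mathfrak{m}_x^{\lfloor N/c\rfloor}\cdot f_\ast\mathcal F.$$
Applied to the sheaves appearing in Theorem~A, this yields $I_k(E)\subseteq I_k(D)+\mathfrak{m}_x^r$ as soon as $q(r)$ is chosen sufficiently large (linearly in $r$, with a coefficient depending on $f$ and $k$).

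The main obstacle will be Step~2: for $k\geq 1$, the formula of \cite[Theorem~A]{MP} involves a filtered complex with nontrivial higher cohomology rather than a single pushforward, so translating the equality that holds upstairs on $Y$ into an $\mathfrak{m}_x$-adic comparison downstairs requires controlling a spectral sequence. A natural workaround is to pass to the formal completion $\widehat{\shO}_{X,x}$ and invoke the formal function theorem to identify $\widehat{I_k(D)}$ with an inverse limit over infinitesimal thickenings of $f^{-1}(x)$, on each of which the formulas for $I_k(D)$ and $I_k(E)$ agree verbatim once $q(r)$ exceeds the order of the thickening; the desired inclusion then follows after projecting to $\shO_{X,x}/\mathfrak{m}_x^r$.
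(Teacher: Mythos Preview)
The statement you are trying to prove is left as an open conjecture in the paper; the authors verify only the case $k=0$ (Example following the conjecture), and they do so by an entirely different method, namely the Summation Theorem for multiplier ideals. There is no proof in the paper for $k\geq 1$ to compare your argument to.

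Your Step~1 contains a genuine error. The claim that a fixed log resolution $f$ of $(X,D+\{x\})$ is also a log resolution of $(X,E)$ near $f^{-1}(x)$, with $(f^{\ast}D)_{\mathrm{red}}=(f^{\ast}E)_{\mathrm{red}}$ there, is false even for very large $q$. Take $X=\mathbb{A}^3$ with coordinates $x,y,z$, let $D=\{yz=0\}$, and let $x$ be the origin. Since $D$ is already simple normal crossings, a log resolution of $(X,D+\{0\})$ is the blow-up $f\colon Y\to X$ of the origin, with exceptional divisor $F\simeq\mathbb{P}^2$. For any $q\geq 3$ set $E=\{yz+x^q=0\}$; then $h_E-h_D=x^q\in\mathfrak{m}^q$, so the hypothesis holds. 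In the chart $y=xy'$, $z=xz'$ one computes $f^{\ast}h_E=x^2(y'z'+x^{q-2})$, so the strict transform $\widetilde{E}=\{y'z'+x^{q-2}=0\}$ has an $A_{q-3}$ singularity at the point $[1:0:0]\in F$. Thus $(f^{\ast}E)_{\mathrm{red}}$ is not simple normal crossings, and it does not agree with $(f^{\ast}D)_{\mathrm{red}}$ (the latter has two components in its strict transform, the former only one). Worse, the singularity of $\widetilde{E}$ becomes \emph{more} complicated as $q$ grows, so no choice of $f$ depending only on $D$ can resolve all such $E$. The phrase ``standard deformation argument'' hides exactly the difficulty that makes the conjecture nontrivial: non-isolated singularities (here, the singular locus of $D$ is the whole $x$-axis) are not finitely determined, so agreement of jets at $x$ to arbitrarily high order does not force analytic equivalence of germs, let alone compatibility of resolutions.

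A secondary issue: your reformulation ``$h_E\equiv u\cdot h\pmod{\mathfrak{m}_x^{q(r)}}$ for some unit $u$'' is stronger than the actual hypothesis $\shO_X(-E)\subseteq\shO_X(-D)+\mathfrak{m}_x^{q(r)}$, which only gives $h_E=a\,h_D+b$ with $b\in\mathfrak{m}_x^{q(r)}$ and $a$ not necessarily a unit (for instance $h_D=y$, $h_E=y^2-x^3$, $q(r)=3$). Your formal-completion workaround at the end is also circular: establishing that $I_k(E)$ depends only on the formal germ of $E$ at $x$ is essentially the content of the conjecture, and is not known for $k\geq 1$.
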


\begin{example}\label{multiplier}
The assertion in the conjecture holds for $k=0$. Indeed, let $\epsilon>0$ be such that $I_0(D)=\I\big(X, (1-\epsilon)D \big)$.
We claim that if $\dim X=n$, then we may take $q(r)$ to be any integer such that
$q(r)>\frac{n+r-1}{\epsilon}$. In order to see this, 
choose $\eta$ small enough, with $0<\eta<\epsilon$, such that $\epsilon-\eta>\frac{n+r-1}{q(r)}$.
It is enough to show that for every such $\eta$ and every reduced effective divisor $E$ with
$\shO_X(-E)\subseteq\shO_X(-D)+{\mathfrak m}_x^{q(r)}$, we have 
$$
%\begin{equation}\label{eq1}
\I \big(X, (1-\eta)E\big)\subseteq I_0(D)+{\mathfrak m}_x^r.
%\end{equation}
$$
By using the Summation theorem (see \cite{Takagi} or \cite{JM}), for every such $E$ we have
$$\I \big(X, (1-\eta)E\big)\subseteq \I\big(X,(\shO_X(-D)+{\mathfrak m}^{q(r)}_x)^{1-\eta}\big) = $$ 
$$=\sum_{\gamma+\delta=1-\eta}\I \big(X, \shO_X(-D)^{\gamma}\cdot{\mathfrak m}_x^{\delta q(r)}\big) \subseteq$$
$$\subseteq \I\big(X, (1-\epsilon)D\big)+\I({\mathfrak m}_x^{(\epsilon-\eta)q(r)})=I_0(D)+{\mathfrak m}_x^{\lfloor (\epsilon-\eta)q(r)\rfloor-n+1}
\subseteq I_0(D)+{\mathfrak m}_x^r,$$
where we used the fact that 
$$\I(X,{\mathfrak m}_x^{\alpha})={\mathfrak m}_x^{\lfloor \alpha\rfloor-n+1}\quad\text{for every}\quad \alpha\geq 0,$$
with the convention that ${\mathfrak m}_x^\ell=\shO_X$ for $\ell\leq 0$ (see  \cite[Example~9.2.14]{Lazarsfeld}).
\end{example}

\subsection{Reduction to isolated singularities}
The interest in Conjecture~\ref{conj_m_adic} comes from the fact that a positive answer would allow one to reduce the proof 
of general properties of Hodge ideals to the case when $D$ has only isolated singularities. We illustrate this by showing that it allows a reduction of Conjecture~\ref{conj_filtration} to this case, which is treated in Theorem~\ref{thm_filtration}.

\begin{theorem}\label{thm_m_adic}
If Conjecture~\ref{conj_m_adic} holds, then Conjecture~\ref{conj_filtration} holds as well.
\end{theorem}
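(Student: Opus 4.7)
The plan is to deduce Conjecture~\ref{conj_filtration} from Conjecture~\ref{conj_m_adic} by approximating a given reduced divisor $D$ with rational singularities by divisors $E$ having only isolated rational singularities, to which Theorem~\ref{thm_filtration} already applies, and then using Conjecture~\ref{conj_m_adic} as an ${\mathfrak m}_x$-adic continuity principle to transfer the conclusion back to $D$.

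First I would reformulate the generation condition as a purely local statement about Hodge ideals. Since \cite[Theorem~B]{MP} gives generation at level $n-2$ unconditionally, generation at level $n-3$ reduces to the single sheaf equality $F_1\Dmod_X\cdot F_{n-3}\shO_X(*D)=F_{n-2}\shO_X(*D)$, which is local on $X$ and trivial away from $D_{\rm sing}$. At a point $x\in D_{\rm sing}$, after writing $F_k\shO_X(*D)=\tfrac{1}{f^{k+1}}I_k(D)$ for a local equation $f$ of $D$, the condition becomes the inclusion
\[
I_{n-2}(D)_x\subseteq J(D)_x,
\]
where $J(D)$ is the $\shO_X$-ideal generated by $f\cdot I_{n-3}(D)$ together with the elements $f\,\partial_ih-(n-2)\,h\,\partial_if$ for $h\in I_{n-3}(D)$ and all coordinate fields $\partial_i$; the reverse inclusion is automatic.

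Next, fix $x\in D_{\rm sing}$ and $r\geq 1$, and let $q=q(r)$ be the integer supplied by Conjecture~\ref{conj_m_adic}. I would construct a reduced effective divisor $E=E_r$ on a Zariski-open affine neighborhood of $x$ with $\shO_X(-E)\equiv\shO_X(-D)\pmod{{\mathfrak m}_x^q}$ and with only isolated rational singularities; the natural candidate is $V(f+g)$ for a suitably generic $g\in{\mathfrak m}_x^q$. Then Theorem~\ref{thm_filtration} applied to $E$ yields $I_{n-2}(E)\subseteq J(E)$ locally at $x$. Conjecture~\ref{conj_m_adic} used in both directions (its hypothesis is symmetric in $D$ and $E$ since $f\equiv f+g\pmod{{\mathfrak m}_x^q}$) gives $I_k(E)\equiv I_k(D)\pmod{{\mathfrak m}_x^r}$ for $k=n-3,n-2$. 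A termwise comparison of the generators of $J(E)$ and $J(D)$, using $\partial_ig\in{\mathfrak m}_x^{q-1}$ and the fact that differentiation shifts the ${\mathfrak m}_x$-adic filtration by $-1$, yields $J(E)\subseteq J(D)+{\mathfrak m}_x^{r-1}$. Chaining inclusions,
\[
I_{n-2}(D)\subseteq I_{n-2}(E)+{\mathfrak m}_x^r\subseteq J(E)+{\mathfrak m}_x^r\subseteq J(D)+{\mathfrak m}_x^{r-1}.
\]
Since $r$ is arbitrary and $J(D)_x$ is finitely generated, Krull's intersection theorem in $\shO_{X,x}$ gives $I_{n-2}(D)_x\subseteq J(D)_x$, as desired.

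The main obstacle I expect is the construction of $E_r$: one must find $g\in{\mathfrak m}_x^q$ so that $V(f+g)$ has \emph{both} only isolated singular points in a neighborhood of $x$ \emph{and} rational singularities at those points. The isolated-singularities condition is a Bertini-type statement: for $g$ generic in a sufficiently large linear subsystem of ${\mathfrak m}_x^q$, a dimension count forces $V(f+g)\cap V(\nabla(f+g))$ to be zero-dimensional near $x$. The persistence of rationality is more delicate; one should verify it via lower semicontinuity of log canonical thresholds in families, using that $\alpha_f>1$ at $x$ (as $D$ has rational singularities) to conclude $\alpha_{f+g}>1$ at each isolated singular point of $V(f+g)$ close to $x$ once $q$ is large. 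Setting this construction up globally on a suitable affine neighborhood, so that Theorem~\ref{thm_filtration} genuinely applies to $E_r$, is where the most care will be required.
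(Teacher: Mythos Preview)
Your overall strategy matches the paper's, but there is a genuine gap in how you invoke Conjecture~\ref{conj_m_adic} ``in both directions.'' To obtain the inclusion $I_{n-2}(D)\subseteq I_{n-2}(E)+{\mathfrak m}_x^r$ you would have to apply the conjecture with $E$ as the fixed divisor and $D$ as the variable one, but the integer $q'(r)$ that the conjecture supplies then depends on $E$, which you only construct \emph{after} fixing $q$. There is no reason to expect $q\geq q'(r)$, and no termination argument is available, so this step is circular as written. The hypothesis $(f)\equiv (f+g)\pmod{{\mathfrak m}_x^q}$ is indeed symmetric, but the function $D\mapsto q(r)$ furnished by the conjecture is not claimed to be uniform in $D$.

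The paper avoids this by using the conjecture only once, for $k=n-3$ with $D$ fixed; that already yields $J_{n-2}(E_\lambda)\subseteq J_{n-2}(D)+{\mathfrak m}_p^r$. For the inclusion in the other direction, $I_{n-2}(D)\subseteq\sum_\lambda I_{n-2}(E_\lambda)$, it brings in a genuinely new ingredient: it places $D$ and the perturbations $E_\lambda$ (defined by $g_\lambda=\lambda_0 f+\sum_i\lambda_i x_i^q$) into a single family $H$ on $X\times\CC^{n+1}$, applies the Restriction Theorem for Hodge ideals \cite[Theorem~A]{MP2} to obtain $I_{n-2}(D)\subseteq I_{n-2}(H)\vert_{y=(1,0,\ldots,0)}$, uses generic semicontinuity \cite[Theorem~16.1]{MP} to identify $I_{n-2}(E_\lambda)$ with $I_{n-2}(H)\vert_{y=\lambda}$ for general $\lambda$, and finishes with an elementary linear-algebra argument. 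This is the idea your sketch is missing. (For the construction of $E_\lambda$, the paper gets isolatedness from Kleiman--Bertini and rationality from Elkik's deformation theorem \cite{Elkik}, which is more direct than your proposed route through semicontinuity of the minimal exponent.)
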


\begin{proof}
In order to check Conjecture~\ref{conj_filtration}, we may assume that $X$ is an affine variety
of dimension $n\geq 3$ and $D$ is defined by $f\in\shO_X(X)$.
%Let $\Dmod_X$ denote the sheaf of differential operators on $X$ and $F_1\D_X$ the subsheaf 
%of operators of order $\leq 1$. 
Since we already know that the filtration on $\shO_X(*D)$
is generated at level $n-2$ by \cite[Theorem~B]{MP}, it is generated at level $n-3$ if and only if
\begin{equation}\label{eq_thm_m_adic}
F_{n-2}\shO_X(*D)\subseteq F_1\Dmod_X\cdot F_{n-3}\shO_X(*D).
\end{equation}
(The opposite inclusion always holds, since the filtration on $\shO_X(*D)$ is compatible with the order filtration on 
$\Dmod_X$.) It is enough to show that the inclusion (\ref{eq_thm_m_adic})
holds at every point $p\in D$, as the assertion is trivial away from $D$. We fix $p\in D$ and, after possibly replacing $X$
by a smaller neighborhood of $p$, we assume that there is an algebraic system of coordinates $x_1,\ldots,x_n$
on $X$ that generate the ideal ${\mathfrak m}_p$ defining $p$. A straightforward computation shows that the right-hand side of
(\ref{eq_thm_m_adic}) is equal to $J_{n-2}(D)\otimes\shO_X\big((n-1)D\big)$, where $J_{n-2}(D)$ is the ideal generated by
$$\left\{f\frac{\partial h}{\partial x_i}-(n-2)h\frac{\partial f}{\partial x_i}\mid h\in \Gamma\big(X,I_{n-3}(D)\big), 
1\leq i\leq n\right\}.$$
We thus have $J_{n-2}(D)\subseteq I_{n-2}(D)$, and we need to show that the opposite inclusion holds at $p$. By Krull's Intersection Theorem, it suffices  to show that
\begin{equation}\label{eq2_thm_m_adic}
I_{n-2}(D)\subseteq J_{n-2}(D)+{\mathfrak m}_p^r\quad\text{for all}\quad r\geq 1.
\end{equation}

Given $r\geq 1$, we apply the assertion in Conjecture~\ref{conj_m_adic} to choose $q\geq r$ such that for every $g\in (f)+{\mathfrak m}_p^q$,
if $E$ is the divisor generated by $g$, then 
\begin{equation}\label{eq3_thm_m_adic}
I_{n-3}(E)\subseteq I_{n-3}(D)+{\mathfrak m}_p^{r+1}.
\end{equation}

We choose 
$$g_{\lambda}=\lambda_0f+\sum_{i=1}^n\lambda_ix_i^q,$$
where $\lambda=(\lambda_0,\ldots,\lambda_n)\in\CC^{n+1}$ is general. 
Let $E_{\lambda}$ be the divisor defined by $g_{\lambda}$.
Note that $E_{\lambda}$ has an isolated singularity at $p$
(in particular, it is reduced).
Indeed, the base locus of the linear system generated by $f,x_1^q,\ldots,x_n^q$ is equal to $\{p\}$; we deduce from the Kleiman-Bertini theorem that for $\lambda$ general, $E_{\lambda}$ is smooth away from $p$. Moreover, $E_{\lambda}$ has a rational singularity at $p$; indeed, this is the case for
$\lambda=(1,0,\ldots,0)$ by assumption, hence the assertion for general $\lambda$ follows from Elkik's result on deformations of rational singularities (see \cite[Th\'{e}or\`{e}me~4]{Elkik}). We can therefore apply Theorem~\ref{thm_filtration} to 
$E_\lambda$, in order to conclude that
$$I_{n-2}(E_{\lambda})=J_{n-2}(E_{\lambda}).$$

On the other hand,
since $g_\lambda \in (f)+{\mathfrak m}_p^q$, we deduce from (\ref{eq3_thm_m_adic}) and
the definition of the ideals $J_{n-2}$
that 
$$J_{n-2}(E_{\lambda})\subseteq J_{n-2}(D)+{\mathfrak m}_p^r.$$

We thus conclude that in order to complete the proof of (\ref{eq2_thm_m_adic}), it is enough to show that
if $U\subseteq\CC^{n+1}$ is any open subset such that $E_{\lambda}$ is reduced for every $\lambda\in U$, then
\begin{equation}\label{eq10}
I_{n-2}(D)\subseteq \sum_{\lambda\in U}I_{n-2}(E_{\lambda}).
\end{equation}
To see this, consider $Y=X\times \CC^{n+1}$, and $h=y_0f+\sum_{i=1}^ny_ix_i^q$, defining a divisor $H$,
where $y_0,\ldots,y_n$ are the coordinates on $\CC^{n+1}$. It follows from \cite[Theorem~16.1, Remark~16.8]{MP} that after possibly
replacing $U$ by a smaller open subset, we may assume that 
\begin{equation}\label{eq11}
I_{n-2}(E_{\lambda})=I_{n-2}(H)\vert_{y=\lambda},
\end{equation}
where the right-hand side denotes the image of $I_{n-2}(H)$ via the morphism  $\shO_X (X)[y_0,\ldots,y_n]\to\shO_X (X)$ 
of $\shO_X(X)$-algebras that maps
$y_i$ to $\lambda_i$ for all $i$. On the other hand, the Restriction Theorem for Hodge ideals (see \cite[Theorem~A]{MP2}) says that
the inclusion 
$$
I_{n-2}(E_{\lambda})\subseteq I_{n-2}(H)\vert_{y=\lambda}
$$
holds for all $\lambda$. In particular, taking
$\lambda=(1,0,\ldots,0)$ we see that
\begin{equation}\label{eq12}
I_{n-2}(D)\subseteq I_{n-2}(H)\vert_{y=(1,0,\ldots,0)}.
\end{equation}
It is an elementary exercise to see that for every $P\in\shO_X(X)[y_0,\ldots,y_n]$ and for every $(a_0,\ldots,a_n)\in\CC^{n+1}$,
$P(a_0,\ldots,a_n)\in\shO_X(X)$ lies in the linear span of $\{P(\lambda)\mid\lambda\in U\}$. This observation, in combination with 
(\ref{eq11}) and (\ref{eq12}), gives the inclusion (\ref{eq10}), completing the proof of the theorem.
\end{proof}

\section*{References}
\begin{biblist}

     \bib{cm05}{article}{
   author={de Cataldo, Mark Andrea A.},
   author={Migliorini, Luca},
   title={The Hodge theory of algebraic maps},
   journal={Ann. Sci. \'Ecole Norm. Sup. (4)},
   volume={38},
   date={2005},
   number={5},
   pages={693--750},
}

\bib{cm07}{article}{
   author={de Cataldo, Mark Andrea A.},
   author={Migliorini, L.},
   title={Intersection forms, topology of maps and motivic decomposition for
   resolutions of threefolds},
   conference={
      title={Algebraic cycles and motives. Vol. 1},
   },
   book={
      series={London Math. Soc. Lecture Note Ser.},
      volume={343},
      publisher={Cambridge Univ. Press, Cambridge},
   },
   date={2007},
   pages={102--137},
}

\bib{Elkik}{article}{
 author= {Elkik, R.},
 title = {Singularit\'es rationnelles et d\'eformations},
 journal = {Invent. Math.},
    volume = {47},
     year = {1978},
    number= {2},
     pages = {139--147}
     }

\bib{elz83}{article}{
   author={Elzein, Fouad},
   title={Mixed Hodge structures},
   conference={
      title={Singularities, Part 1},
      address={Arcata, Calif.},
      date={1981},
   },
   book={
      series={Proc. Sympos. Pure Math.},
      volume={40},
      publisher={Amer. Math. Soc., Providence, RI},
   },
   date={1983},
   pages={345--352},
}

\bib{Elzeinetal}{collection}{
   title={Hodge theory},
   series={Mathematical Notes},
   volume={49},
   editor={Cattani, Eduardo},
   editor={El Zein, Fouad},
   editor={Griffiths, Phillip A.},
   editor={L\^e, D\~ung Tr\'ang},
   publisher={Princeton University Press, Princeton, NJ},
   date={2014},
   pages={xviii+589},
   isbn={978-0-691-16134-1},
   review={\MR{3288678}},
}
		
		\bib{EV}{article}{
   author={Esnault, H{\'e}l{\`e}ne},
   author={Viehweg, Eckart},
   title={Rev\^etements cycliques},
   conference={
      title={Algebraic threefolds},
      address={Varenna},
      date={1981},
   },
   book={
      series={Lecture Notes in Math.},
      volume={947},
      publisher={Springer, Berlin-New York},
   },
   date={1982},
   pages={241--250},
      }

\bib{Fulton}{book}{
   author={Fulton, William},
   title={Introduction to toric varieties},
   series={Annals of Mathematics Studies},
   volume={131},
   note={The William H. Roever Lectures in Geometry},
   publisher={Princeton University Press, Princeton, NJ},
   date={1993},
}

\bib{Hartshorne}{book}{
   author={Hartshorne, Robin},
   title={Algebraic geometry},
   note={Graduate Texts in Mathematics, No. 52},
   publisher={Springer-Verlag, New York-Heidelberg},
   date={1977},
   pages={xvi+496},
}

\bib{GKKP}{article}{
   author={Greb, Daniel},
   author={Kebekus, Stefan},
   author={Kov{\'a}cs, S{\'a}ndor J.},
   author={Peternell, Thomas},
   title={Differential forms on log canonical spaces},
   journal={Publ. Math. Inst. Hautes \'Etudes Sci.},
   number={114},
   date={2011},
   pages={87--169},
}

\bib{JM}{article}{
author = {Jow, S. Y.},
author={Miller, Ezra}, 
title= {Multiplier ideals of sums via cellular resolutions},
journal= {Math. Res. Lett.},
volume = {15},
year= {2008},
 number= {2},
 pages= {359--373},
 }

\bib{Kovacs}{article}{
  author = {Kov\'acs, S\'andor},
     title = {Rational, log canonical, {D}u {B}ois singularities: on the
              conjectures of {K}oll\'ar and {S}teenbrink},
   journal = {Compositio Math.},
    number= {118},
      date = {1999},
     pages = {123--133},
}

\bib{Lazarsfeld}{book}{
       author={Lazarsfeld, Robert},
       title={Positivity in algebraic geometry II},  
       series={Ergebnisse der Mathematik und ihrer Grenzgebiete},  
       volume={49},
       publisher={Springer-Verlag, Berlin},
       date={2004},
}

%\bib{Kollar}{article}{
 %  author={Koll{\'a}r, J.},
  % title={Singularities of pairs},
  % conference={
   %   title={Algebraic geometry--Santa Cruz 1995},
   %},
   %book={
    %  series={Proc. Sympos. Pure Math.},
      %volume={62},
      %publisher={Amer. Math. Soc., Providence, RI},
   %},
   %date={1997},
   %pages={221--287},
%}

\bib{MP}{article}{
      author={Musta\c t\u a, Mircea},
      author={Popa, Mihnea},
	title={Hodge ideals},
	journal={preprint arXiv:1605.08088, to appear in Memoirs of the AMS}, 
	date={2016}, 
}

\bib{MP2}{article}{
author={Musta\c t\u a, Mircea},
      author={Popa, M.},
	title={Restriction, subadditivity, and semicontinuity theorems for Hodge ideals},
	journal={preprint arXiv:1606.05659, to appear in Int. Math. Res. Not.}, 
	date={2016}, 
}

\bib{Saito-MHM}{article}{
   author={Saito, Morihiko},
   title={Mixed Hodge modules},
   journal={Publ. Res. Inst. Math. Sci.},
   volume={26},
   date={1990},
   number={2},
   pages={221--333},
}

\bib{Saito-B}{article}{
   author={Saito, M.},
   title={On $b$-function, spectrum and rational singularity},
   journal={Math. Ann.},
   volume={295},
   date={1993},
   number={1},
   pages={51--74},
}
\bib{Saito-HF}{article}{
   author={Saito, Morihiko},
   title={On the Hodge filtration of Hodge modules},
   journal={Mosc. Math. J.},
   volume={9},
   date={2009},
   number={1},
   pages={161--191},
}

\bib{St83}{article}{
   author={Steenbrink, J. H. M.},
   title={Mixed Hodge structures associated with isolated singularities},
   conference={
      title={Singularities, Part 2},
      address={Arcata, Calif.},
      date={1981},
   },
   book={
      series={Proc. Sympos. Pure Math.},
      volume={40},
      publisher={Amer. Math. Soc., Providence, RI},
   },
   date={1983},
   pages={513--536},
}

\bib{Takagi}{article}{
author= {Takagi, S.},
title = {Formulas for multiplier ideals on singular varieties},
journal = {Amer. J. Math.},
volume= {128},
year = {2006},
number = {6},
pages= {1345--1362},
}

\end{biblist}

\end{document}